\documentclass[10pt, reqno]{amsart}

\usepackage{amsmath}
\usepackage{cite}
\usepackage{amssymb}
\usepackage{mathrsfs}  
\usepackage{hyperref}
\usepackage{enumitem}

\newtheorem{theorem}{Theorem}[section]
\newtheorem{lemma}[theorem]{Lemma}

\newtheorem{proposition}[theorem]{Proposition}
\newtheorem{remark}[theorem]{Remark}

\theoremstyle{definition}
\newtheorem{definition}[theorem]{Definition}

\newcommand\R{\mathbb{R}}

\newcommand\C{\mathbb{C}}

\newcommand{\F}{\mathcal{F}}
\newcommand{\M}{\mathcal{M}}
\newcommand{\D}{\mathcal{D}}

\newcommand{\qtq}[1]{\quad\text{#1}\quad}

\let\Im=\undefined\DeclareMathOperator{\Im}{Im}

\newcommand{\ol}[1]{\overline{#1}}
\newcommand{\wh}[1]{\widehat{#1}}

\newcommand{\norm}[1]{\|#1\|}
\newcommand{\snorm}[1]{\|#1\|}
\newcommand{\bnorm}[1]{\| #1\|}

\usepackage{color}

\numberwithin{equation}{section}

\allowdisplaybreaks

\begin{document}

\title[Decay and scattering for CCM]{Dispersive decay and scattering for \\ continuum Calogero--Moser models}

\author[R.~Killip]{Rowan Killip}
\address{CEREMADE, CNRS, Universit\'e Paris Dauphine--PSL, Place du Mar\'echal de Lattr\'e de Tassigny, 75016 Paris, France}
\email{killip@ceremade.dauphine.fr}

\author[T.~Laurens]{Thierry Laurens}
\address{Department of Mathematics, University of Michigan, Ann Arbor, MI 48109, USA}
\email{tlaurens@umich.edu}

\author[J.~Murphy]{Jason Murphy}
\address{Department of Mathematics, University of Oregon, Eugene, OR 97403, USA}
\email{jamu@uoregon.edu}

\author[M.~Vi\c{s}an]{Monica Vi\c{s}an}
\address{Institute of Science and Technology Austria (ISTA), Am Campus 1, 3400 Klosterneuburg, Austria}
\email{monica.visan@ist.ac.at}

\begin{abstract} We prove pointwise decay and scattering for small-mass solutions to the focusing and defocusing continuum Calogero--Moser models under suitable decay assumptions on the initial data. Our proof is based on an explicit formula for solutions and the exact conservation of the Galilean vector field associated to the equation.
\end{abstract}

\maketitle


\section{Introduction} 

The focusing and defocusing continuum Calogero--Moser models
\begin{equation}\label{CCM}\tag{CCM}
i \tfrac{d}{dt} q = - q'' \pm  2iqC_+\big(|q|^2\big)'
\end{equation}
comprise a pair of completely integrable PDEs that have received considerable interest in recent years.  In the equations above, primes denote spatial derivatives, upper/lower signs correspond to the focusing/defocusing cases, and the operator $C_+$ denotes the orthogonal projection from $L^2(\R)$ onto the Hardy space
\[
L^2_+(\R) := \{f\in L^2(\R):\, \widehat{f}(\xi) = 0 \qtq{for} \xi<0\}.
\]
For a discussion of the physical origins of \eqref{CCM}, we refer the reader to \cite{KLV:CCM} and the references therein (cf. \cite{Pelinovsky1995, Abanov2009, Calogero1, Calogero2, Moser}). 

As observed in \cite{Abanov2009}, \eqref{CCM} conserve the energies
\[
H_\pm(q) = \tfrac12 \int_\R |q'\mp iqC_+(|q|^2)|^2\,dx.
\]
The equations additionally conserve the mass
\[
M(q) = \int_\R |q|^2\,dx, 
\]
which is invariant under the scaling symmetry of \eqref{CCM}
\[
q(t,x)\mapsto q_\lambda(t,x) = \lambda^{-\frac12} q\bigl( \tfrac{t}{\lambda^2}, \tfrac{x}{\lambda}\bigr), \qquad \lambda>0.
\]
In particular, \eqref{CCM} are mass-critical equations.

As discussed in \cite{KMV}, the symplectic structure pertinent to the phase space $L^2_+(\R)$ is complicated. Nevertheless, $H_\pm(q)$ do generate the \eqref{CCM} flows and the mass $M(q)$ generates phase rotations.

The models \eqref{CCM} admit Lax pair formulations \cite{GL, KLV:CCM}.  Specifically, following the convention of \cite{KLV:CCM} we have that
\begin{equation}\label{LP0}
\text{$q(t)$ solves \eqref{CCM}} \quad \iff \quad \tfrac{d}{dt} L_{q(t)} = \bigl[ P_{q(t)}, L_{q(t)}\bigr],
\end{equation}
where $L_q,P_q$ are the operators acting on $L^2_+(\R)$ given by 
\begin{equation}\label{LP}
L_q = -i\partial \mp qC_+\overline{q}  \qtq{and} P_q = i\partial^2 \pm 2q\partial C_+\overline{q}.
\end{equation}

Recently there has been a great deal of progress on \eqref{CCM}, much of which has relied on complete integrability in an essential way.  First, the work \cite{KLV:CCM} established the following critical global well-posedness result (see also \cite{Badreddine:GWP} for the analogous result in the periodic setting):
\begin{theorem}[Global well-posedness, \cite{KLV:CCM}]\label{T:GWP}  \eqref{CCM} are globally well-posed in
\begin{align*}
B_{M} = \{ q\in L^2_+(\R) : \|q\|_{L^2}^2 < M_* \} \qtq{where} M_*=\begin{cases} \infty &\quad\text{in the defocusing case,}\\
2\pi &\quad\text{in the focusing case}.\end{cases}
\end{align*} 
\end{theorem}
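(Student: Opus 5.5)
Global well-posedness in $B_M$ is obtained from an explicit formula for the solution in terms of the Lax operator, followed by a continuity argument in $L^2$.

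\textbf{Step 1: explicit formula for smooth data.} Let $X$ be multiplication by $x$, and $X^*$ its adjoint on $L^2_+$, i.e.\ $X^*=C_+xC_+$ on a suitable domain. For $q$ smooth and decaying I would establish
\[
q(t,z)=\tfrac{1}{2\pi i}\,I_+\bigl((X+2tL_{q_0}-z)^{-1}q_0\bigr),\qquad \Im z>0,
\]
where $I_+(f)=\lim_{y\to\infty}iy f(iy)$. This formula would be checked in three stages.
\begin{itemize}
\item[(a)] It holds at $t=0$: a Cauchy integral computation gives $q(z)=\frac{1}{2\pi i}I_+((X-z)^{-1}q)$.
\item[(b)] Under the flow, the operator $X^*+2tL_{q(t)}$ is conjugated by the unitary group generated by $P_{q(t)}$. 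This is the conserved Galilean-type commutator identity $[P,X^*]=2L$ modulo terms that vanish in the $I_+$ pairing.
\item[(c)] The map $q\mapsto I_+$ is intertwined with this conjugation, since $P_q$ annihilates constants in the appropriate sense.
\end{itemize}

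\textbf{Step 2: the focusing threshold.} Using the formula requires $L_q$ to be self-adjoint with controlled spectrum. In the defocusing case $L_q=-i\partial+qC_+\bar q\ge -i\partial$, so there is no difficulty. In the focusing case I would use the sharp inequality
\[
\|qC_+(\bar q f)\|\le \tfrac{\|q\|_2^2}{2\pi}\|f'\|\quad\text{(roughly)}.
\]
This makes the perturbation relatively form-bounded with bound $<1$ exactly when $M(q)<2\pi$. Via the energy $H_+$, it also yields uniform $H^1$ bounds for mass below threshold.

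\textbf{Step 3: extension to $L^2$ data.} Next I would show that the formula extends continuously in $q_0\in B_M$ in the topology of $L^2$, uniformly on compact time intervals. The key tool is resolvent identities: $q_0\mapsto (X+2tL_{q_0}-z)^{-1}q_0$ is continuous because $L_q$ depends continuously on $q$ in the resolvent sense (Step 2). This gives a continuous flow on a dense set, extended by continuity. It remains to show that the limit solves \eqref{CCM} and preserves mass.

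\textbf{Main obstacle.} The hardest part is equicontinuity, i.e.\ no loss of $L^2$ compactness (tightness in frequency) along the flow. Without it the continuous extension fails, since the equation is mass-critical. I would attack this through conservation of the Lax spectral data, which controls high-frequency mass uniformly: the quantities $\langle (L_q+\kappa)^{-1}q,q\rangle$ are conserved, and for large $\kappa$ they control $\int \frac{|\hat q|^2}{\xi+\kappa}$. In the focusing case this uses the bound $M<2\pi$ to keep $L_q+\kappa$ invertible. Equicontinuity plus the explicit formula then gives global well-posedness in $B_M$.
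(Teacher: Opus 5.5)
Your architecture matches the ingredients this paper recalls from \cite{KLV:CCM}; the paper itself only cites Theorem~\ref{T:GWP} and gives no proof. The formula \eqref{EF} comes from $q(t)=U(t)q(0)$, $L_{q(t)}=U(t)L_{q(0)}U(t)^*$ and the exact identity \eqref{commutator}. These give $U(t)^*XU(t)=X+2tL_{q(0)}$ (equivalently \eqref{Jq}), together with invariance of $I_+$ because $P_q1=0$. The genuine gap is in Step~3, which is where the $L^2$ theory actually lives.

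\textbf{Continuity of the formula.} Continuity of $q\mapsto L_q$ in the resolvent sense does not by itself control $(X+2tL_q-z)^{-1}$. $X$ is not self-adjoint and does not commute with $L_q$, so for rough $q$ this resolvent must first be constructed. In \cite{KLV:CCM} it is built as the resolvent of a maximally accretive extension of $X+2tL_q$ from $D(X)\cap D(L_0)$. More seriously, $I_+$ is unbounded on $L^2_+$, so even $L^2$-continuity of $A(t,z;q)q$ in $q$ would not make $q(t,z)$ continuous. What works is the resolvent identity $A=A_0\pm A_0\,2tqC_+\ol q\,A$, in which $I_+\circ A_0$ is bounded on $L^1$ and on $L^2$ by \eqref{A0 4}--\eqref{A0 5}. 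This must be combined with mapping bounds on $A(t,z;q)$ such as \eqref{not enough}. For large mass the Born series behind Proposition~\ref{P:A bdd} is unavailable, and \eqref{not enough} is proved uniformly only over bounded equicontinuous sets. So equicontinuity is already an input to the continuity of the formula, not merely to the last step.

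\textbf{From weak to strong convergence.} The formula yields $q_n(t,z)\to q(t,z)$ for each $\Im z>0$, i.e.\ only weak convergence $q_n(t)\rightharpoonup q(t)$ in $L^2_+$. Since $\|q_n(t)\|_{L^2}=\|q_n(0)\|_{L^2}$, norm convergence is equivalent to the weak limit retaining all the mass. Equicontinuity rules out mass escaping to high frequencies, but not to spatial infinity (compare $q+\phi(\cdot-n)$). So ``equicontinuity plus the explicit formula'' does not close the argument. You also need tightness of $\{q_n(t)\}$ on compact time intervals, or some other proof that the weak limit has full mass. Only then do precompactness and uniqueness of the limit finish the job.

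\textbf{Where the threshold enters.} In your equicontinuity argument, the threshold is not what makes $L_q+\kappa$ invertible. For any $q\in L^2_+$, splitting $q$ into low and high frequencies shows that $L_q$ is bounded below, and its spectrum is conserved. What the threshold gives is the coercivity $L_q\ge(1-\frac{M(q)}{2\pi})(-i\partial)$, via the sharp form bound $\|C_+(\ol qf)\|_{L^2}^2\le\frac{M(q)}{2\pi}\langle -i\partial f,f\rangle$. Combined with $L_q\le -i\partial$ (focusing case) and operator monotonicity of $s\mapsto s/(s+\kappa)$, this turns conservation of $\langle L_q(L_q+\kappa)^{-1}q,q\rangle$ into uniform control of high frequencies. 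This breaks at $M=2\pi$: $L_{Q_\lambda}Q_\lambda=0$ for $Q_\lambda=\lambda^{-1/2}Q(\cdot/\lambda)$, so the conserved quantity vanishes along the whole concentrating family.

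\textbf{Bookkeeping in Step~1.}
\begin{itemize}
\item The normalization must be $I_+(f)=\lim_{y\to\infty}2\pi y f(iy)$; with yours, the formula at $t=0$ returns $-q(z)/(2\pi i)$ rather than $q(z)$.
\item Your $X$ and $X^*$ are swapped. The resolvent at $\Im z>0$ exists for the maximal operator $\wh{Xf}=i\partial_\xi\wh f$, not for multiplication by $x$ on its natural domain.
\item \eqref{commutator} holds exactly, with no correction terms. It is $X-2tL_{q(t)}$, not $X^*+2tL_{q(t)}$, that is unitarily conjugate to $X$.
\end{itemize}
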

In the focusing case, the threshold $2\pi$ which represents the mass of the ground state 
\[
Q(x)=\tfrac{\sqrt{2}}{x+i},
\]
is sharp: the work \cite{HK} showed that solutions with mass arbitrarily larger than $2\pi$ may exhibit frequency cascades, while  \cite{KKK:blowup} demonstrated that initial data with mass arbitrarily larger than $2\pi$ can blow up in finite time.  The proof of Theorem~\ref{T:GWP} in \cite{KLV:CCM} was  based in part on the derivation of a remarkable {explicit formula} for solutions to \eqref{CCM}, of the type previously established for the Benjamin--Ono and cubic Szeg\H{o} equations \cite{G:EFBO, G:EFS1, G:EFS2}:

\begin{theorem}[Explicit formula, \cite{KLV:CCM}]\label{T:EF}
Let $q(t)$ denote the global solution to \eqref{CCM} with initial data $q(0)=q\in L^2_+(\R)$, where $\|q\|_{L^2}^2 < 2\pi$ in the focusing case. Then
\begin{equation}\label{EF}
q(t,z) = \tfrac{1}{2\pi i} I_+ \big\{ \big( X + 2tL_{q} - z \big)^{-1} q \big\}\qtq{for}z\in \C\qtq{with}\Im z>0.
\end{equation}
Here $q(t,z)$ is defined via harmonic extension {\upshape(}see \eqref{PIF 2}{\upshape)}, while the operators $I_+$ and $X$ are introduced in Definitions~\ref{D:I}~and~\ref{D:X}.
\end{theorem}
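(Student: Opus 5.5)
The plan is the Lax-pair/conjugation argument used for the Benjamin--Ono and Szeg\H{o} explicit formulas. It has three ingredients: a Cauchy-type representation of Hardy functions, a unitary flow generated by $P_q$, and a conjugation identity for $X$. Throughout I assume the properties of $I_+$ and $X$ from Definitions~\ref{D:I} and~\ref{D:X}. Concretely, $X$ is the (maximally dissipative) compression of multiplication by $x$ to $L^2_+$, so $(X-z)^{-1}$ exists for $\Im z>0$. And $I_+$ extends $f\mapsto\int f\,dx$ (for instance $I_+(f)=\lim_{\xi\downarrow0}\sqrt{2\pi}\,\hat f(\xi)$, up to normalization), chosen so that
\[
f(z)=\tfrac1{2\pi i} I_+\bigl((X-z)^{-1}f\bigr)
\]
for every $f\in L^2_+(\R)$. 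This is just the Cauchy integral formula, since $(X-z)^{-1}f=\frac{f(x)-f(z)}{x-z}+\frac{f(z)}{x-z}$ and the definitions are arranged to make this exact. It is the $t=0$ case of \eqref{EF}.

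\textbf{Step 1: reduction to nice data.} By density and the well-posedness Theorem~\ref{T:GWP}, it suffices to treat $q$ Schwartz (within $L^2_+$, below the $2\pi$ threshold in the focusing case), provided both sides of \eqref{EF} depend continuously on $q\in L^2_+$ for fixed $t$ and $z$. The left side does, because $q(t)$ is continuous in $L^2$ and evaluation at $z$ is continuous. For the right side I need $(X+2tL_q-z)^{-1}$ to be bounded, with norm at most $(\Im z)^{-1}$, and to depend continuously on $q$. Boundedness should follow from $X+2tL_q$ being maximally dissipative: $L_q$ is self-adjoint, and for small mass or in the defocusing case its domain is that of $-i\partial$ by a Kato-type bound. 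Continuity in $q$ then follows from the resolvent identity.

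\textbf{Step 2: the unitary flow and the key commutator.} Comparing \eqref{CCM} with \eqref{LP}, the equation says exactly $\frac{d}{dt}q=P_{q(t)}q(t)$. Let $U(t)$ solve $\frac{d}{dt}U=P_{q(t)}U$ with $U(0)=\mathrm{Id}$. Since $P_{q(t)}$ is anti-self-adjoint, $U(t)$ is unitary. By \eqref{LP0}, $L_{q(t)}=U(t)L_qU(t)^*$, and by construction $q(t)=U(t)q$. Formally
\[
[X,P_q]=[x,i\partial^2]\pm 2[x,q\partial C_+\ol q]=2L_q+R_q,
\]
where $R_q$ is a rank-one error coming from $[X,C_+]$ and from the boundary behaviour of $X$; it has the form $f\mapsto c\,q\,I_+(\ol q f)$ or similar. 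If $R_q$ were absent, then $\frac{d}{dt}\bigl(U^*XU\bigr)=U^*[X,P]U=2U^*L_{q(t)}U=2L_q$ would give $U(t)^*XU(t)=X+2tL_q$. Hence
\[
(X-z)^{-1}q(t)=U(t)\,(X+2tL_q-z)^{-1}q,
\]
and \eqref{EF} would follow once $I_+\circ U(t)=I_+$ on the relevant vectors, that is, once $I_+(P_{q}h)=0$.

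\textbf{Step 3 (main obstacle): controlling the rank-one terms.} Neither of the two idealized identities in Step 2 is literally true. Instead I would directly differentiate
\[
F(t):=I_+\Bigl(U(t)\bigl(X+2tL_q-z\bigr)^{-1}q\Bigr)
\]
and show $F(t)=I_+\bigl((X-z)^{-1}q(t)\bigr)$ by checking that both have the same derivative. For this I would compute $I_+(P_q h)$ and $I_+(R_q h)$ explicitly for $h$ in a dense domain. We have $I_+(i h'')=0$, while $\pm 2I_+(q\partial C_+(\ol q h))$ produces a term proportional to $I_+(\ol q h)$-type quantities. The aim is to show that these boundary contributions cancel exactly against the rank-one part of $[X,P_q]$ once sandwiched between $I_+$ and the resolvent. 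I expect this bookkeeping, done with honest domain considerations (e.g.\ requiring $h$ with $\langle x\rangle h\in L^2$ and using the identity $I_+((X-z)^{-1}f)=2\pi i f(z)$ as a test), to be the delicate part.

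The fallback is to regularize. One option is to replace $X$ by $X_\varepsilon=X(1-i\varepsilon X)^{-1}$, which is bounded, and $I_+$ by the pairing against $(1-i\varepsilon x)^{-1}$. Then verify $\frac{d}{dt}$ of the $\varepsilon$-version of \eqref{EF} up to errors vanishing as $\varepsilon\to0$, and pass to the limit using the resolvent bound from Step 1.
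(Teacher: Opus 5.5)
\textbf{Verdict: there is a genuine gap.} Your Step~2 is the right argument. The paper imports this theorem from \cite{KLV:CCM}, and the tools it records from there (Proposition~\ref{P:unique}, Lemma~\ref{X commute}, and the argument yielding \eqref{Jq}) are exactly the ingredients of Step~2. The gap is that you abandon Step~2 on a false premise. Step~3 then offers only an anticipated cancellation, so no proof is actually given.

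Both ``idealized'' identities are in fact true.
\begin{enumerate}
\item[(a)] \emph{The commutator has no error term.} For nice $g$, $xC_+g-C_+(xg)=\frac{i}{2\pi}\int g\,dx$ is a \emph{constant} function, and the derivative in $P_q$ kills it: $x\,\partial C_+(\ol q f)-\partial C_+(x\ol q f)=-C_+(\ol q f)$. Hence $[X,\pm2q\partial C_+\ol q]=\mp2qC_+\ol q$. Together with $[x,i\partial^2]=-2i\partial$, this gives exactly $[X,P_q]=2L_q$, which is Lemma~\ref{X commute}. So $R_q=0$.
\item[(b)] \emph{$I_+$ annihilates $P_q h$.} First, $I_+(ih'')=\lim_{\xi\downarrow0}\bigl(-i\sqrt{2\pi}\,\xi^2\wh h(\xi)\bigr)=0$. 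Second, $q\,\partial C_+(\ol q h)$ is a product of two $L^2_+$ functions. It therefore lies in $L^1$, with a continuous Fourier transform supported in $[0,\infty)$, and that transform must vanish at $\xi=0$. So $I_+(P_qh)=0$.
\end{enumerate}
Your assertion that this term produces nonzero ``$I_+(\ol q h)$-type'' contributions is incorrect. The bookkeeping planned in Step~3 chases terms that are identically zero.

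What genuinely needs doing, and is missing, is to make Step~2 rigorous.
\begin{enumerate}
\item[(1)] \emph{The conjugation identity.} Prove it as in \eqref{Jq}: show that $U(t)X\psi-(X-2tL_{q(t)})U(t)\psi$ solves $\partial_t\phi=P_{q(t)}\phi$ with $\phi(0)=0$, and invoke the uniqueness in Proposition~\ref{P:unique} for $q,\psi\in H^\infty_+$ with $\langle x\rangle q,\langle x\rangle\psi\in L^2$. You must then upgrade this to $(X+2tL_q-z)^{-1}=U(t)^*(X-z)^{-1}U(t)$. That step is not automatic: $I_+$ is continuous in the graph norm of $X$ and vanishes on every such $\psi$, so these vectors are not a core for $X$.
\item[(2)] \emph{Invariance of $I_+$.} You need $I_+(U(t)g)=I_+(g)$ precisely for vectors like $(X-z)^{-1}q(t)$, which decay only like $1/x$; on weighted vectors $I_+$ vanishes trivially. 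So you cannot simply differentiate the unbounded functional $I_+$ in $t$. Your fallback is the right tool here, and it is in fact the definition: $\langle f,(1-i\varepsilon x)^{-1}\rangle$ is exactly the quantity in Definition~\ref{D:I} with $y=\varepsilon^{-1}$. Since $\frac{d}{dt}\langle U(t)g,(1-i\varepsilon x)^{-1}\rangle=-\langle U(t)g,P_{q(t)}(1-i\varepsilon x)^{-1}\rangle$ and $\|P_{q(t)}(1-i\varepsilon x)^{-1}\|_{L^2}\to0$ (using $C_+(\ol q)=0$), you obtain the invariance for all $g\in L^2_+$ without regularizing $X$.
\item[(3)] \emph{Continuity in Step~1.} Operator-norm continuity of $q\mapsto(X+2tL_q-z)^{-1}$ does not make the right side of \eqref{EF} continuous, because $I_+$ is unbounded on $L^2_+$. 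You need bounds on $I_+\circ(X+2tL_q-z)^{-1}$ itself. These can come from the resolvent identity with $A_0$, \eqref{A0 5}, and the mapping properties in \cite[Proposition~4.11]{KLV:CCM}.
\item[(4)] \emph{Domain and dissipativity.} Your Kato-type domain argument, as stated, does not cover the full focusing range $\norm{q}_{L^2}^2<2\pi$. Also, maximal dissipativity of $X+2tL_q$ does not follow merely from dissipativity of the two summands.
\end{enumerate}
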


In\cite{Chen},  X. Chen applied the explicit formula for defocusing \eqref{CCM} to obtain the following scattering result:

\begin{theorem}[Scattering in the defocusing case, \cite{Chen}]\label{T:Chen} Let $q(t)$ denote the global solution to the defocusing \eqref{CCM} with initial data $q\in L_+^2(\R)$. Then \begin{equation}\label{scattering}
\exists \, q^\pm\in L^2_+(\R)\qtq{such that} \lim_{t\to\pm\infty} \|q(t)- e^{it\Delta}q^\pm\|_{L^2(\R)} = 0.
\end{equation}
\end{theorem}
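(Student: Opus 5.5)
Our plan is to obtain scattering from the explicit formula of Theorem~\ref{T:EF}, using the usual strategy for $L^2$-scattering: show that $e^{-it\Delta}q(t)$ is Cauchy in $L^2$ as $t\to\pm\infty$. By mass conservation and the well-posedness of Theorem~\ref{T:GWP}, the map $q\mapsto q(t)$ is uniformly continuous on bounded subsets of $L^2_+$ for fixed $t$. We therefore first reduce to a dense class of initial data, say $q\in L^2_+$ with $xq\in L^2$ and $q'\in L^2$. Scattering for general data then follows by a standard approximation argument, provided the convergence of $e^{-it\Delta}q(t)$ is controlled in terms of $\|q\|_{L^2}$ alone. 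For this we need a stability statement that is uniform in time. We intend to obtain it from the formula itself rather than from perturbation theory: the right-hand side of \eqref{EF} depends on $q$ through the resolvent of $X+2tL_q$.

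The heart of the argument is a \emph{weak} convergence statement. We will show that the profile $e^{-it\Delta}q(t)$ converges weakly in $L^2$, and then upgrade this to strong convergence using conservation of mass. The upgrade requires that the limit have full mass $\|q\|_{L^2}^2$, i.e.\ that no mass escapes into a non-dispersing component.

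In the defocusing case, $L_q=-i\partial+qC_+\bar q$ is a positive perturbation of $-i\partial$ with purely absolutely continuous spectrum; there are no eigenvalues, hence no solitons. Concretely, we will test $q(t)$ against $e^{it\Delta}\phi$ with $\phi$ Schwartz and rewrite \eqref{EF} using the resolvent of $X+2tL_q$. Writing $L_q=\Omega(-i\partial)\Omega^*$ via wave operators $\Omega$ for the pair $(L_q,-i\partial)$, which exist and are complete since $qC_+\bar q$ is trace class when $q$ is nice, we expect
\[
q(t)\approx e^{it\Delta}\bigl(\Omega^*$-transformed data$\bigr)\qtq{as}t\to\infty .
\]
The reason is that $X+2tL_q$ conjugates to $X+2t(-i\partial)$ up to errors vanishing as $t\to\infty$. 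This in turn uses the identity $e^{-it\Delta}Xe^{it\Delta}=X+2t(-i\partial)$ for the Galilean generator. This identification also produces $q^\pm$ explicitly and shows $\|q^\pm\|_{L^2}=\|q\|_{L^2}$, by unitarity of $\Omega$ restricted to the absolutely continuous subspace, which is everything.

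The main obstacle is justifying the conjugation $(X+2tL_q-z)^{-1}\approx \Omega\,(X+2t(-i\partial)-z)^{-1}\Omega^*$ with error $o(1)$ in a norm that survives the application of $I_+$. Here $X$ is unbounded and does not commute with $\Omega$. We would try to control the commutator $[X,\Omega]$ using the weighted assumption $xq\in L^2$. We would also use the intertwining relation $\Omega e^{-isL_0}=e^{-isL_q}\Omega$ to write the resolvent as a Laplace transform in $s$, where the large parameter $2t$ accelerates the dispersive decay of $e^{-isL_q}$ relative to $X$. If this estimate proves delicate, a fallback is to bound $\|xe^{-it\Delta}q(t)\|_{L^2}$, i.e.\ the Galilean vector field, uniformly in $t$. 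Combined with $L^2$ mass conservation, this gives compactness of $\{e^{-it\Delta}q(t)\}$, and hence strong convergence along the weak limit.
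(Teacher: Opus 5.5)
The paper does not prove this statement; it quotes it from Chen, who combines \eqref{EF} with a distorted Fourier transform for $L_q$. Your outline points in a similar direction, but it is not a proof, because its decisive step is only announced. You never establish $(X+2tL_q-z)^{-1}\approx\Omega\,(X+2tL_0-z)^{-1}\Omega^*$ with an error that still vanishes after the unbounded functional $I_+$ is applied. Even granting it, passing via \eqref{A0 4} to $q(t)\approx e^{it\Delta}\Omega^*q$ also requires knowing how $I_+$ interacts with $\Omega$ (e.g.\ $I_+\circ\Omega=I_+$ on the range of $A_0$), and this does not follow from strong convergence.

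Moreover, the spectral input is justified incorrectly: $qC_+\ol q$ is not trace class on $L^2_+$, and not even compact. Take $\wh q\in C^\infty_c((0,\infty))$ and $f_n=\varphi e^{inx}$ with $0\neq\wh\varphi\in C^\infty_c((-1,1))$. For large $n$ the product $\ol q f_n$ has Fourier support in $(0,\infty)$, so $qC_+(\ol q f_n)=|q|^2 f_n$ has constant nonzero norm although $f_n\rightharpoonup 0$. At high frequency the perturbation acts like the potential $|q|^2$, so Kato--Rosenblum is unavailable. Existence and completeness of $\Omega$, and the absence of singular spectrum of $L_q$, would have to be proved another way (e.g.\ Kato smoothness of $\ol q$ relative to the translation group $e^{-isL_0}$, or an explicit distorted Fourier transform). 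You need these facts for $\|\Omega^*q\|_{L^2}=\|q\|_{L^2}$, and hence for your weak-to-strong upgrade.

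The auxiliary steps also fail as stated. Reducing to data with $xq,q'\in L^2$ requires $\sup_t\|q(t)-p(t)\|_{L^2}\to0$ as $p\to q$ in $L^2$. Theorem~\ref{T:GWP} does not give this, and you only hope to extract it from the formula. In the proof of Theorem~\ref{T}(ii) the paper sidesteps this because its $O(t^{-1})$ rate is uniform along the approximating sequence; as the paper notes, no rate can exist for general $L^2$ data.

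Your fallback, a uniform bound on $\|xe^{-it\Delta}q(t)\|_{L^2}=\|(x+2it\partial)q(t)\|_{L^2}$, is exactly the paper's route to Theorem~\ref{T}(ii). There $\|(X-2tL_{q(t)})q(t)\|_{L^2}=\|xq\|_{L^2}$ by \eqref{Jq} and unitarity. The remaining term is bounded by $|t|\,\|q(t)\|_{L^\infty}^2\|q\|_{L^2}$, which is $O(1)$ only because of the decay \eqref{decay}. That decay comes from the Born series of Proposition~\ref{P:A bdd} and needs $\|q\|_{L^2}\le\delta_0$. For large-mass defocusing data you supply no substitute, so the fallback cannot give the theorem as stated.

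Lastly, bounds on $\|f\|_{L^2}$ and $\|xf\|_{L^2}$ do not yield $L^2$-compactness: the sequence $\varphi e^{inx}$ again shows that mass can escape to high frequency. The paper instead concludes via the Dollard factorization \eqref{MDFM} and the bound $\|\partial_t\hat f\|_{L^2}\lesssim|t|^{-2}$.
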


Furthermore, \cite{Chen} characterized the asymptotic states $q^\pm$ in terms of the initial data $q$ and the distorted Fourier transform associated to $L_{q}$.

In recent work, S. Hadama \cite{Hadama}  established small-mass scattering for \eqref{CCM} in both the focusing and defocusing cases:

\begin{theorem}[Small-mass scattering, \cite{Hadama}] Let $q(t)$ denote the global solution to \eqref{CCM} with initial data $q\in L_+^2(\R)$. If $\|q\|_{L^2}$ is sufficiently small, then \eqref{scattering} holds.
\end{theorem}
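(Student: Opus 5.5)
Our plan is to obtain the small-mass scattering statement as a consequence of a dispersive decay estimate, which we would derive from the explicit formula of Theorem~\ref{T:EF} together with a Galilean (pseudo-conformal) vector field. Set $J(t) = x + 2it\partial$, the vector field that commutes with the linear flow. First we would show that a suitable nonlinear analogue of $J(t)q(t)$ is exactly conserved. Conjugating the explicit formula \eqref{EF} suggests the candidate: $X + 2tL_{q(t)}$ evolves by conjugation under the Lax operator $P_{q(t)}$. Concretely, we would check that
\[
\tfrac{d}{dt}\bigl[(X+2tL_{q(t)})q(t)\bigr] = P_{q(t)}\bigl[(X+2tL_{q(t)})q(t)\bigr],
\]
using \eqref{LP0} and the commutator $[P_q, X]$ together with $\partial_t q = P_q q$ (up to the gauge conventions of \cite{KLV:CCM}). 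Since $P_q$ is anti-selfadjoint on $L^2_+$, this gives $\|(X+2tL_{q(t)})q(t)\|_{L^2} = \|Xq\|_{L^2}$ for all $t$, provided $xq\in L^2$. We would first carry this out for Schwartz-class data and then pass to the limit using the well-posedness of Theorem~\ref{T:GWP}.

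Next we would convert this conservation into pointwise decay. Write $2tL_{q} = -2it\partial \mp 2t\, qC_+\bar q$, so that $(X+2tL_q)q = J(t)q \mp 2t\,qC_+(|q|^2)$ modulo the sign conventions on $X$. The standard estimate
\[
\|f\|_{L^\infty} \lesssim |t|^{-1/2}\|f\|_{L^2}^{1/2}\|J(t)f\|_{L^2}^{1/2}
\]
comes from writing $J(t) = 2it\, e^{ix^2/4t}\partial e^{-ix^2/4t}$ and applying Gagliardo--Nirenberg. We would therefore need to control $\|J(t)q(t)\|_{L^2}$ by the conserved quantity plus the error $2|t|\,\|qC_+(|q|^2)\|_{L^2}$. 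Better still, we would write the whole operator $X+2tL_q$ as $2it\,e^{ix^2/4t}(\partial + \text{potential})e^{-ix^2/4t}$ and use the structure of $L_q$ on $L^2_+$: the quadratic form of $L_q$ is controlled from below via the sharp inequality behind the $2\pi$ threshold, giving $\|(-i\partial \mp qC_+\bar q)f\|\gtrsim (1-\|q\|_{L^2}^2/2\pi)\,\|\partial f\|$ in the relevant sense. This coercivity, applied to the modulated function $e^{-ix^2/4t}q(t)$, is the step we expect to be the main obstacle. The difficulty is that the modulation does not preserve the Hardy space, so $C_+$ does not commute with it. We would try to handle the resulting commutator $[C_+, e^{-ix^2/4t}]$ perturbatively using small mass, or else work directly from the explicit formula \eqref{EF}, bounding $(X+2tL_q - z)^{-1}$ by a resolvent expansion in powers of $t^{-1}$.

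Once we have the decay $\|q(t)\|_{L^\infty}\lesssim |t|^{-1/2}$ with constant depending on $\|xq\|_{L^2}$, scattering follows from a Cook-type argument. We would write $\partial_t(e^{-it\Delta}q(t)) = \pm 2e^{-it\Delta}[qC_+(|q|^2)']$ and show that this is integrable in $L^2$. Since the nonlinearity is quartic in effect (cubic with a derivative), we expect a bound of the form $\|qC_+(|q|^2)'\|_{L^2}\lesssim \|q\|_{L^\infty}^2\|J q\|_{L^2}/|t|$, which decays like $|t|^{-2}$ and is therefore integrable. The resulting limits $q^\pm$ lie in $L^2_+$, and density combined with $L^2$ stability from Theorem~\ref{T:GWP} extends scattering to all small-mass $L^2_+$ data.
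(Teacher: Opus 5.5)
The paper does not prove this theorem. It is quoted from \cite{Hadama}, and the paper states explicitly that its own Theorem~\ref{T}(ii) needs stronger hypotheses. Your plan follows the paper's method closely: a conserved Galilean quantity, $|t|^{-1/2}$ decay, then Cook's argument using the phase cancellation in $qC_+(|q|^2)'$. That method only gives scattering for data with $xq\in L^2$.

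\textbf{The genuine gap is your final density step.} Theorem~\ref{T:GWP} gives continuous dependence only on compact time intervals: $q_n(t)\to q(t)$ in $L^2$ for each fixed $t$, which is all the paper ever uses. It does not give this uniformly on $\R$. To transfer scattering from weighted approximants $q_n$ to a general $q\in L^2_+$, you must interchange the limits $n\to\infty$ and $t\to\infty$. That requires one of two things:
\begin{enumerate}[label=(\alph*)]
\item $\sup_{t\in\R}\|q(t)-q_n(t)\|_{L^2}\to 0$;
\item a rate of convergence of $e^{-it\Delta}q_n(t)$ that is uniform in $n$.
\end{enumerate}
Your argument cannot give (b). Your Cook bound is controlled by $\|xq_n\|_{L^2}$, which must blow up when $xq\notin L^2$. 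The paper's Cauchy argument works only because \eqref{hyp} keeps this quantity bounded, and the paper remarks that no rate depending only on $L^2$ information can hold. Statement (a) is a global-in-time stability result of the same depth as the theorem itself, and nothing in your plan supplies it. \cite{Hadama} obtains the needed global control from a linear theory for Schr\"odinger equations with rough time-dependent potentials together with a bilinear Strichartz estimate. \cite{Chen}, in the defocusing case, works with the distorted Fourier transform of $L_q$.

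Two smaller problems affect the weighted part.
\begin{enumerate}[label=(\roman*)]
\item \emph{The sign is wrong.} Since $[X,P_q]=2L_q$, the operator conserved along the flow is $X-2tL_{q(t)}$, which reduces to $x+2it\partial$ at $q=0$. The operator $X+2tL_q$ appearing in \eqref{EF} acts on the initial data instead. Indeed, $\frac{d}{dt}[(X+2tL_{q(t)})q(t)]=P_{q(t)}[(X+2tL_{q(t)})q(t)]+4L_{q(t)}q(t)$, so it is not conserved.
\item \emph{The decay step is left open, and aimed at the wrong obstacle.} Coercivity of a modulated $L_q$ is not needed, and $t^{-1}$ is not the right small parameter. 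The paper expands $A(t,z;q)$ in a Born series whose small parameter is the mass (Proposition~\ref{P:A bdd}). It uses $|t|\,\|A_0(t,z)\|_{L^1_+\to L^\infty_+}\lesssim 1$ uniformly in $\Im z>0$, so each factor $2tqC_+\bar q A_0$ has $L^1_+\to L^1_+$ norm $\lesssim\|q\|_{L^2}^2$. This yields decay for $L^1$ data directly from \eqref{EF}.
\end{enumerate}

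Your own Gagliardo--Nirenberg bound would in fact close the decay step directly:
$2|t|\,\|qC_+(|q|^2)\|_{L^2}\le 2|t|\,\|q\|_{L^\infty}^2\|q\|_{L^2}\le\|q\|_{L^2}^2\,\|(x+2it\partial)q\|_{L^2}$.
This can be absorbed into $\|(x+2it\partial)q(t)\|_{L^2}$ when $\|q\|_{L^2}<1$. That norm is finite a priori for the smooth weighted approximants by Proposition~\ref{P:unique}. Either repair, however, still yields only the weighted result, not the stated $L^2_+$ theorem.
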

In contrast to the other works discussed above, the techniques in \cite{Hadama} do not rely on the complete integrability of \eqref{CCM}.  In fact, \cite{Hadama} treats a more general model that includes both \eqref{CCM} and the intermediate NLS equation as special cases, relying on a linear theory for Schr\"odinger equations with rough time-dependent potentials, as well as a bilinear Strichartz estimate from \cite{OT}. There is no evidence that the general model considered in \cite{Hadama} is completely integrable.

The contribution of this note is to present elementary arguments to obtain small-mass dispersive decay and scattering from the explicit formula \eqref{EF}, under suitable decay assumptions on the initial data.  We prove the following theorem:

\begin{theorem}[Pointwise decay and scattering]\label{T} Let $q(t)$ denote the global solution to \eqref{CCM} with initial data $q\in L_+^2(\R)$. If $\|q\|_{L^2}$ is sufficiently small, then the following hold: 
\begin{itemize}
\item[(i)] If $q\in L^1(\R)$, then for almost every $t\in\R$, 
\begin{equation}\label{decay}
\|q(t)\|_{L^\infty_x}\lesssim |t|^{-\frac12}(1+\|q\|_{L^2}^2)\|q\|_{L^1}.
\end{equation}
\item[(ii)] If $xq\in L^2(\R)$, then \eqref{decay} and \eqref{scattering} hold. Indeed,
\begin{align}\label{rate}
\|q(t)- e^{it\Delta}q^\pm\|_{L^2_x}\lesssim  |t|^{-1} \quad \text{as $t\to\pm\infty$}.
\end{align}
\end{itemize}
\end{theorem}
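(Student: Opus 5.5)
The plan is to expand the explicit formula \eqref{EF} perturbatively around the free flow. The free part reproduces $e^{it\Delta}q$, and the interaction part is a convergent Neumann series in $\|q\|_{L^2}^2$. For scattering we combine the resulting decay with the conserved Galilean vector field in a Cook-type argument.

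\textbf{Part (i).} Write $L_q=L_0\mp qC_+\overline{q}$ with $L_0=-i\partial$, and set
\[
R_0(z)=(X+2tL_0-z)^{-1},\qquad R(z)=(X+2tL_q-z)^{-1}.
\]
The second resolvent identity gives
\[
R(z)q=R_0(z)q\pm 2t\,R_0(z)\,qC_+\overline{q}\,R(z)q .
\]
Since \eqref{EF} reduces to the linear Schr\"odinger evolution when the coefficient $q$ in $L_q$ is replaced by $0$, the first term yields $e^{it\Delta}q$ evaluated at $z$. This term obeys the usual dispersive bound $|t|^{-1/2}\|q\|_{L^1}$, uniformly in $\Im z>0$. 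For the second term I will use the conjugation identity
\[
X+2tL_0=e^{ix^2/4t}\,(-2ti\partial)\,e^{-ix^2/4t},
\]
so that $R_0(z)$ has an explicit kernel. This makes $f\mapsto I_+\{R_0(z)f\}$ a pairing of $f$ with an explicit function $g_{z,t}$, and I expect $\|g_{z,t}\|\lesssim|t|^{-1/2}$ in a norm dual to the space in which $qC_+\overline{q}R(z)q$ lands. The other ingredient is a bound of the form
\[
\|2t\,qC_+\overline{q}\,R(z)\,h\|\lesssim \|q\|_{L^2}^2\,\|h\|,
\]
in a suitable (weighted/$L^1$-type) norm. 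Given it, a Neumann series in the small parameter $\|q\|_{L^2}^2$ closes and produces the factor $(1+\|q\|_{L^2}^2)\|q\|_{L^1}$. Finally, let $\Im z\downarrow0$: the boundary values of the harmonic extension exist for a.e.\ $x$, and a.e.\ $t$ enters through the same limiting procedure. This step — choosing norms in which $2tR_0(z)$ composed with $qC_+\overline{q}$ is bounded independently of $t$ and $z$ — is the main obstacle. I would try first to exploit the fact that $qC_+\overline{q}$ maps $L^2_+$ into $L^1$ with norm at most $\|q\|_{L^2}^2$, and to control $2tR_0(z)$ from $L^1$ to $L^\infty$ via the explicit kernel.

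\textbf{Part (ii).} By Cauchy--Schwarz, $\|q\|_{L^1}\lesssim\|q\|_{L^2}^{1/2}\|xq\|_{L^2}^{1/2}$, so \eqref{decay} follows from (i). Next I would use the exact conservation of the Galilean vector field associated to the Lax operator, namely the conjugation of $X+2tL_{q(t)}$ back to time zero. This should give
\[
\|J(t)q(t)\|_{L^2}\lesssim 1,\qquad J(t)=x+2it\partial=2it\,e^{ix^2/4t}\,\partial\,e^{-ix^2/4t},
\]
after controlling the nonlinear correction $2t\,qC_+\overline{q}\,q(t)$ using the decay already proved. For scattering, apply Cook's method:
\[
\tfrac{d}{dt}\,e^{-it\Delta}q(t)=\pm 2\,e^{-it\Delta}\bigl[qC_+(|q|^2)'\bigr].
\]
The key point is that $(|q|^2)'=2\,\mathrm{Re}(\overline{q}\,\partial q)$, and that
\[
\partial q=e^{ix^2/4t}\partial\bigl(e^{-ix^2/4t}q\bigr)+\tfrac{ix}{2t}\,q .
\]
The second summand contributes $\overline{q}\,\tfrac{ix}{2t}\,q$, which is purely imaginary and is therefore killed by the real part. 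Hence
\[
(|q|^2)'=2\,\mathrm{Re}\bigl(\overline{q}\,(2it)^{-1}J(t)q\bigr),
\]
and $L^2$-boundedness of $C_+$ gives
\[
\bigl\|qC_+(|q|^2)'\bigr\|_{L^2}\lesssim |t|^{-1}\,\|q(t)\|_{L^\infty}^2\,\|J(t)q(t)\|_{L^2}\lesssim |t|^{-2}.
\]
This is integrable at infinity. So $e^{-it\Delta}q(t)$ converges in $L^2$ to some $q^\pm$, with tail bound $\int_{|t|}^\infty s^{-2}\,ds\sim|t|^{-1}$, which is \eqref{rate}. Moreover $q^\pm\in L^2_+$, because $L^2_+$ is closed and invariant under $e^{it\Delta}$.
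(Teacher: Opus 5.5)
Your strategy matches the paper's in both parts. In (i) you use the resolvent identity, the identity $I_+[A_0(t,z)f]=2\pi i\,[e^{it\Delta}f](z)$ of \eqref{A0 4}, and a Born series in $\|q\|_{L^2}^2$ (Proposition~\ref{P:A bdd}). In (ii) you use a conserved nonlinear Galilean operator, plus the decay from (i), to bound $(x+2it\partial)q(t)$, and then integrate in time.

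Two points in (i) need correcting. First, $qC_+\ol{q}$ is bounded $L^\infty\to L^1$ with norm at most $\|q\|_{L^2}^2$. It is not bounded $L^2_+\to L^1$ with that norm, which fails by scaling, and it is the $L^\infty\to L^1$ bound that composes with the resolvent estimate. Second, the estimate you flag as the main obstacle, $|t|\,\|A_0(t,z)\|_{L^1_+\to L^\infty_+}\lesssim 1$ uniformly in $\Im z>0$, is exactly \eqref{A0 3}, which the paper imports from \cite{KLV:CCM}. Your proposed route to it does not work as stated:
\begin{itemize}
\item the correct conjugation is $x-2it\partial=e^{-ix^2/4t}(-2it\partial)e^{ix^2/4t}$, so your phase has the wrong sign;
\item more seriously, on $L^2_+$ one has $A_0(t,z)=C_+(x-2it\partial-z)^{-1}$, and since the Szeg\H{o} projection $C_+$ is unbounded on $L^\infty$, the trivial $1/(2|t|)$ bound on the full-line kernel does not give the $L^1\to L^\infty$ estimate. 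You should cite \eqref{A0 3} instead.
\end{itemize}

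In (ii), your Cook's-method computation, where $\frac{ix}{2t}|q|^2$ is removed by taking the real part, is a clean physical-space version of the paper's Dollard-factorization argument. There $|q_n|^2=|\D(t)g_n|^2$, so the phase $\M(t)$ drops out, and both give the same $|t|^{-2}$ bound. Using (i) for $\|q(t)\|_{L^\infty}$ instead of $\|g_n\|_{L^\infty}\lesssim\|f_n\|_{L^1}$ is fine.

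The genuine gap is the conservation law. It is the central new input, and you only assert it, with the wrong sign. Since $U(t)^*(X+2tL_{q(t)})U(t)=X+4tL_{q(0)}$, the norm $\|(X+2tL_{q(t)})q(t)\|_{L^2}$ is not conserved. The correct statement is $U(t)X=(X-2tL_{q(t)})U(t)$, equivalently $U(t)^*XU(t)=X+2tL_{q(0)}$, the operator appearing in \eqref{EF}. To prove it:
\begin{itemize}
\item differentiate $U(t)X-(X-2tL_{q(t)})U(t)$ using $\partial_tU=P_{q(t)}U$, the Lax equation \eqref{LP0}, and $[X,P_q]=2L_q$ from Lemma~\ref{X commute};
\item then invoke the uniqueness in Proposition~\ref{P:unique}.
\end{itemize}

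These tools, as well as differentiating $e^{-it\Delta}q(t)$ in Cook's argument, require $q\in H^\infty_+$ with $\langle x\rangle q\in L^2$. For data with only $q,xq\in L^2$ you must:
\begin{itemize}
\item run everything for approximants $q_n$ with bounds uniform in $n$;
\item obtain $\|e^{-it\Delta}q_n(t)-q_n^+\|_{L^2}\lesssim t^{-1}$ uniformly in $n$;
\item use $L^2$ well-posedness to show that $\{q_n^+\}$ is Cauchy and to transfer the rate to $q(t)$.
\end{itemize}
This regularization step is missing from your proposal.
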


The proof of Theorem~\ref{T} relies essentially on the explicit formula \eqref{EF}.  We note, however, that the mere existence of an explicit formula does not guarantee dispersive decay as in Theorem~\ref{T}(i).    Indeed, while the Benjamin--Ono equation enjoys a similar explicit formula \cite{G:EFBO, KLV:BO}, dispersive decay may fail in this model even for arbitrarily small initial data $q\in L^1(\R)\cap L^2(\R)$. Concretely, the Lax operator $\mathcal L_q=-i\partial- C_+q$ with $q$ equal to an arbitrarily small multiple of the Benjamin--Ono soliton has negative spectrum, which is incompatible with dispersive decay; see \cite{PelinovskySulem}.

While Theorem~\ref{T}(ii) places stronger requirements on the initial data than \cite{Chen, Hadama}, we do obtain the reward of a quantitative rate of convergence. Simple time translation arguments show that one cannot hope to obtain a scattering rate under merely $L^2(\R)$ assumptions on the initial data. Moreover, we contend that the proof presented here is both straightforward and connects to existing techniques for studying long-time behavior for nonlinear Schr\"odinger equations (NLS).

To prove Theorem~\ref{T}(ii) we introduce the operator $J_{q}(t):=X-2tL_{q}$, an analogue of the Galilean operator $J_0(t)=x+2it\partial$ that is ubiquitous in the study of long-time behavior for NLS. Just as the $L^2$-norm of $J_0(t)q(t)$ is conserved for solutions to the linear Schr\"odinger equation, we will see that the $L^2$-norm of $J_{q(t)}(t)q(t)$ is conserved for solutions to \eqref{CCM}.  Moreover, when \eqref{decay} holds, the difference between $J_{q(t)}(t)$ and $J_0(t)$ is a bounded operator on $L^2$.  Thus, under the assumptions of Theorem~\ref{T} we can obtain boundedness of the $L^2$-norm of $J_0(t)q(t)$, where $q(t)$ is the solution to \eqref{CCM}.  

With such input, one can readily adapt arguments from works such as \cite{HayashiNaumkin}, which utilize the Dollard factorization for the free propagator and ODE-type methods to obtain the existence of limits as $t\to\pm\infty$. Our analysis also shows how a phase cancellation in the nonlinearity of \eqref{CCM} yields \emph{unmodified} scattering. This is in contrast to the modified scattering with logarithmic phase correction typical of cubic NLS equations (cf.  \cite{DeiftZhou, HayashiNaumkin, IfrimTataru, KatoPusateri, LindbladSogge, Murphy:scat}).

The rest of this note is organized as follows.  In Section~\ref{S:notation} we introduce notation and recall several results from \cite{KLV:CCM}. We also prove Proposition~\ref{P:A bdd}, an improved small-mass resolvent estimate for the operator appearing in the explicit formula \eqref{EF}. Finally, in Section~\ref{S:Proof} we prove our main result, Theorem~\ref{T}.

\subsection*{Acknowledgements} Part of this work was completed while the authors were participating in the program `Dispersive Integrable Equations: Pathfinders in Hamiltonian PDE' at the Institut Henri Poincar\'e. R.~K. was supported by NSF grant DMS-2452346 and the project ANR-25-CFFS-0004 ``PhysMathEDPInteg'' of the France 2030 program. T.~L. was supported by an AMS-Simons Travel Grant.  J.~M. was supported by Simons Foundation grant MPS-TSM-00006622. M.~V. was supported by NSF grant DMS-2348018.

\section{Notation and preliminaries}\label{S:notation}


The Hardy space $ L^p_+(\R)$ is the (closed) subspace of $L^p(\R)$ comprised of those functions $f$ whose Poisson integral
\begin{align}\label{PIF 2}
    f (z) :=  \int \frac{\Im z}{\pi |x-z|^2} f(x) \,dx, \qtq{defined for} \Im z >0, 
\end{align}
is holomorphic (in the upper half-plane).  By H\"older's inequality,
\begin{align}\label{PIF}
\bigl| f (z) \bigr| \leq \tfrac1\pi \bigl(\tfrac1{\Im z}\bigr)^{\frac1p} \bigl(\tfrac{\sqrt{\pi}\Gamma(p'-\frac12)}{\Gamma(p')}\bigr)^{\frac1{p'}}\| f \|_{L^p}   
\end{align}
for any $1\leq p \leq \infty$, where $p'$ denotes the H\"older dual of $p$. 

Our convention for the Fourier transform $\mathcal{F}$ is
\begin{equation*}
\hat f(\xi) = \tfrac{1}{\sqrt{2\pi}} \int_\R e^{-i\xi x} f(x)\,dx,  \qtq{so that } f(x) = \tfrac{1}{\sqrt{2\pi}} \int_\R e^{i\xi x} \hat f(\xi)\,d\xi.
\end{equation*}
With this convention, we have the Plancherel identity
\begin{equation*}
\|f\|_{L^2}=\|\hat f\|_{L^2}.
\end{equation*}

The linear Schr\"odinger propagator is defined as the Fourier multiplier operator $e^{it\Delta}=\F^{-1} e^{-it\xi^2} \F$.  This operator admits the following explicit representation:
\begin{equation}\label{schrod kernel} 
[e^{it\Delta}f](x) = (4\pi i t)^{-\frac12} \int_\R e^{\frac{i(x-y)^2}{4t}} f(y)\,dy.
\end{equation}
Using \eqref{schrod kernel}, we can directly obtain the standard dispersive estimate
\begin{equation}\label{dispersive-estimate}
\|e^{it\Delta} f\|_{L_x^\infty} \lesssim |t|^{-\frac12}\|f\|_{L^1}
\end{equation}
as well as the Dollard factorization
\begin{equation}\label{MDFM}
e^{it\Delta} = \M(t) \D(t) \F \M(t),
\end{equation}
where $\M(t) :f \mapsto e^{\frac{ix^2}{4t}}f(x)$ is a multiplication operator and $\D(t)$ denotes the $L^2$-preserving dilation
\[
[\D(t) f](x) = (2it)^{-\frac12}f(\tfrac{x}{2t}). 
\]


The definitions of the operators $I_+$ and $X$ appearing in the explicit formula \eqref{EF} are as follows:

\begin{definition}\label{D:I}
Let $I_+$ denote the (unbounded) operator on $L^2_+(\R)$ defined via
\begin{equation}\label{I+ def}
I_+(f) := \lim_{y\to\infty} \sqrt{2\pi} \int_0^\infty y e^{-y\xi} \, \wh f(\xi) \,d\xi =\lim_{\xi\downarrow 0} \sqrt{{2\pi}} \hat f(\xi),
\end{equation}
with domain $D(I_+)$ given by the set of those $f\in L^2_+(\R)$ for which this limit exists.
\end{definition}

\begin{definition}\label{D:X}
Let $X$ denote the (unbounded) operator on $L^2_+(\R)$ defined as the generator of the contraction semigroup
\begin{equation}\label{E:D:X}
e^{-itX} f = \tfrac{1}{\sqrt{2\pi}} \int_0^\infty e^{i\xi x} \widehat f(\xi + t)\,d\xi = C_+\bigl( e^{-itx} f \bigr)
\end{equation}
acting on $L^2_+(\R)$.  Concretely,
\[
D(X) = \bigl\{ f\in L^2_+ (\R): \widehat f\in H^1\bigl([0,\infty)\bigr) \bigr\} \quad \text{and} \quad \widehat{Xf}(\xi) =i\tfrac{d{\widehat f}}{d\xi}(\xi)\quad \text{for} \quad f\in D(X).
\]
\end{definition}

\begin{remark}\label{Xx}
If $f\in L^2_+(\R)$ satisfies $\langle x\rangle f(x)\in L^2(\R)$, then $f\in D(X)$ and $[Xf](x)=xf(x)$.   
\end{remark}

The spectrum of $X$ consists of the closed lower half-plane.  For $\Im z>0$, the resolvent of $X$ is given by
\begin{equation}\label{X}
(X-z)^{-1} f = \tfrac{f(x)-f(z)}{x-z}.
\end{equation}

The following proposition recapitulates results from \cite{KLV:CCM}. 

\begin{proposition}
For $t\in \R$ and $z\in \C$ with $\Im z>0$, the operator
\begin{equation}\label{A0}
A_0(t,z) := (X +2tL_0 -z)^{-1} = e^{-it\Delta} (X - z)^{-1}e^{it\Delta} 
\end{equation}
satisfies the following estimates:
\begin{align}
\norm{A_0(t,z)}_{L^2_+\to L^2_+} &\leq (\Im z)^{-1},\label{A0 1} \\
\sup_{t\neq 0} |t|\norm{A_0(t,z) }_{L^1_+\to L^\infty_+} &\lesssim 1.\label{A0 3}
\end{align}
The range of $A_0$ lies in the domain of $I_+$,
\begin{equation}\label{A0 4} 
I_+\bigl( A_0(t,z) f \bigr)  = 2\pi i \big[ e^{it\Delta} f \big](z) \qtq{for all} t\in\R\qtq{and} \Im z>0,
\end{equation}
and the composition $I_+\circ A_0$ is bounded:
\begin{align}
\big| I_+[A_0(t,z)f]| &\lesssim \min\big\{ (\Im z)^{-\frac{1}{2}} \norm{f}_{L^2}, |t|^{-\frac{1}{2}} \norm{f}_{L^1} \big\} .\label{A0 5}
\end{align}
\end{proposition}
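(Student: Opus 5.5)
The plan is to prove everything from the conjugation identity \eqref{A0}. Once that identity is in hand, each of the four estimates reduces to a property of either the resolvent $(X-z)^{-1}$ or the free propagator. We first record that $L_0=-i\partial$, so on the Fourier side (restricted to $\xi\ge 0$) $X$ acts as $i\partial_\xi$ and $2tL_0$ as multiplication by $2t\xi$.

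\emph{Step 1: the identity \eqref{A0}.} We verify on the Fourier side that $e^{-it\Delta}Xe^{it\Delta}=X+2tL_0$ as operators on $L^2_+$. Since $e^{it\Delta}$ is multiplication by $e^{-it\xi^2}$, it preserves $D(X)$, and on $[0,\infty)$ we have
\[
i\partial_\xi\bigl(e^{-it\xi^2}\hat f\bigr)=e^{-it\xi^2}\bigl(i\partial_\xi\hat f+2t\xi \hat f\bigr).
\]
No boundary term at $\xi=0$ appears because $H^1$ functions on the half-line are not constrained at $0$; the domain issue is handled by conjugating the semigroup. Because $e^{it\Delta}$ is unitary, the conjugated operator has the same spectrum as $X$, namely the closed lower half-plane. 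Taking resolvents gives \eqref{A0}, and \eqref{A0 1} then follows from the bound $\|(X-z)^{-1}\|\le(\Im z)^{-1}$, which holds because $X$ generates a contraction semigroup (Hille--Yosida).

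\emph{Step 2: the $I_+$ identities \eqref{A0 4} and \eqref{A0 5}.} By \eqref{X}, with $g=e^{it\Delta}f$ we have $(X-z)^{-1}g=\frac{g(x)-g(z)}{x-z}$. We first compute $I_+$ of this function, taking $\xi\downarrow0$ or equivalently testing against $ye^{-y\xi}$, and obtain $2\pi i\,g(z)$. The reason is that $I_+(h)=\lim_{y\to\infty}\langle h, \cdot\rangle$ against a kernel that behaves like $-iy/(x+iy)$, i.e.\ $I_+(h)=\lim_{y\to\infty} -iy\,h(iy)$ (up to normalization), and $-iy\cdot\frac{g(iy)-g(z)}{iy-z}\to 2\pi i\, g(z)$ once constants are tracked, using $g(iy)\to0$. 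Next we must show that $I_+$ commutes past $e^{-it\Delta}$. This holds because $I_+$ only sees $\lim_{\xi\downarrow0}\hat h(\xi)$, and the factor $e^{it\xi^2}$ tends to $1$ there; we also need continuity of $\hat h$ at $0$, which holds since $\hat h\in H^1$. This gives \eqref{A0 4}. Then \eqref{A0 5} follows from \eqref{PIF} with $p=2$ together with unitarity, and from \eqref{PIF} with $p=\infty$ (where the constant is uniform in $\Im z$) combined with \eqref{dispersive-estimate}.

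\emph{Step 3: the $L^1\to L^\infty$ bound \eqref{A0 3}, which is the main obstacle.} The plan is to write the resolvent as a semigroup integral,
\[
(X-z)^{-1}=i\int_0^\infty e^{isz}e^{-isX}\,ds,
\]
so that
\[
A_0=i\int_0^\infty e^{isz}\,e^{-it\Delta}C_+e^{-isx}e^{it\Delta}\,ds .
\]
Since $C_+$ commutes with $e^{\pm it\Delta}$, and $e^{-it\Delta}e^{-isx}e^{it\Delta}$ is a Galilean boost, the integrand equals $C_+$ applied to $e^{-isx}$ times a translation by $2ts$, up to a phase. The bare integral $\int_0^\infty e^{-s\Im z}\,ds$ only yields $(\Im z)^{-1}$, not $|t|^{-1}$, so we instead compute the kernel directly. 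In Fourier variables the kernel of $A_0$ is $i\,e^{it\xi^2}\,\mathbf 1_{\eta>\xi}\,e^{iz(\eta-\xi)}e^{-it\eta^2}$ on $0<\xi<\eta$, where one variable is input and the other output. Its position kernel is
\[
K(x,y)=\tfrac{i}{2\pi}\iint_{0<\xi<\eta}e^{ix\xi-iy\eta}e^{it(\xi^2-\eta^2)}e^{iz(\eta-\xi)}\,d\xi\, d\eta .
\]
We change variables to $u=\eta-\xi>0$ and $v=\eta+\xi$, so the phase becomes $-tuv$ plus linear terms. Integrating in $v$ over $v>u$ yields an oscillatory factor of size $\lesssim (|t|u)^{-1}$ via an explicit exponential integral, and the remaining $u$-integral is then handled by noting that $\int e^{-iuw}\frac{e^{-itu^2}}{tu}du$-type expressions are bounded uniformly, like a truncated Hilbert-type integral. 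This should give $\sup_{x,y}|K|\lesssim|t|^{-1}$. If this computation proves delicate, the fallback is to treat the contour integral in complex $u$ directly, exploiting the decay from $\Im z>0$. This step is where the real work lies.
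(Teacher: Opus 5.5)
Your Steps 1 and 2 are essentially right. The paper itself gives no proof of this proposition; it imports it from \cite{KLV:CCM}. Two small corrections. First, $e^{it\Delta}$ does not preserve $D(X)$, because $\partial_\xi e^{-it\xi^2}=-2it\xi e^{-it\xi^2}$ is unbounded; it does preserve the core $D(X)\cap D(L_0)$, which is all you need. Second, the correct normalization is $I_+(h)=\lim_{y\to\infty}2\pi y\,h(iy)$, which gives exactly $2\pi i\,g(z)$.

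\textbf{The gap is in Step 3.} The bound $\sup_{x,y}|K(x,y)|\lesssim|t|^{-1}$ that you aim for is false. Carrying out the inner integral exactly gives, for $t>0$,
\[
K(x,y)=\frac{1}{4\pi t}\int_0^\infty \frac{e^{iu(z-y)-itu^2}}{u-\frac{x-y}{2t}-i0}\,du .
\]
So the $v$-integration produces a factor $|\tfrac{x-y}{2}-tu|^{-1}$, not $(|t|u)^{-1}$. Moreover, the $u$-integral is one-sided and its numerator equals $1$ at $u=0$, so $K(x,y)=\frac{1}{4\pi t}\log\frac{1}{|x-y|}+O(1)$ as $y\to x$. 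On the diagonal, where your factor $(|t|u)^{-1}$ is correct, $\int_0 du/u$ simply diverges. This endpoint singularity is present for every $z$, so no contour deformation exploiting $\Im z>0$ can remove it.

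The reason is structural. On $L^1_+$ a kernel is determined only modulo functions of $y$ lying in the Hardy space. Your Fourier-side kernel is the co-analytic (Riesz) projection in $y$ of a bounded kernel that jumps at $y=x$. That projection turns the jump into a logarithm, which is BMO (consistent with $H^1$--BMO duality) but not $L^\infty$.

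To prove \eqref{A0 3} you need a different representative. It is easiest to find in physical space, using ingredients you already have. For $h=A_0(t,z)f$, $\wh h$ is continuous on $[0,\infty)$, so $(x-2it\partial)h=(X+2tL_0)h+\tfrac{i}{2\pi}I_+(h)$ as distributions on $\R$. Combined with \eqref{A0 4}, this is the first-order ODE $(x-z)h-2ith'=f-[e^{it\Delta}f](z)$. Its unique $L^2$ solution for $t>0$ is
\[
[A_0(t,z)f](x)=\frac{i}{2t}\int_{-\infty}^x e^{\frac{i}{4t}[(y-z)^2-(x-z)^2]}\bigl(f(y)-[e^{it\Delta}f](z)\bigr)\,dy ;
\]
for $t<0$, integrate from $+\infty$ instead.

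\emph{The $f$-term.} The exponential has modulus $e^{-\Im z\,(x-y)/(2t)}\le 1$ on the region of integration, so this term is at most $\|f\|_{L^1}/(2t)$.

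\emph{The constant term.} It equals $-\tfrac{i}{\sqrt t}[e^{it\Delta}f](z)\,G\bigl(\tfrac{x-z}{2\sqrt t}\bigr)$, where
\[
G(\omega)=e^{-i\omega^2}\int_{-\infty}^{\omega}e^{i\sigma^2}\,d\sigma=\tfrac{\sqrt\pi}{2}e^{i\pi/4}w(e^{-3\pi i/4}\omega),
\]
with $w$ the Faddeeva function. $G$ is bounded on the closed lower half-plane. Together with \eqref{PIF} for $p=\infty$ and \eqref{dispersive-estimate}, this term is $\lesssim|t|^{-1}\|f\|_{L^1}$ uniformly in $z$. This gives \eqref{A0 3}.
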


For $q\in L^2_+(\R)$, $t\in \R$, and $z\in \C$ with $\Im z>0$, the operator
\begin{equation*}
A(t,z;q) := (X + 2tL_q - z)^{-1}
\end{equation*}
appearing in the explicit formula \eqref{EF} was constructed in \cite[Proposition 4.10]{KLV:CCM} as the resolvent of a maximally accretive operator, which coincides with the naive notion of the sum $X+2t L_q$ when restricted to  $D(X)\cap D(L_0)$.

Mapping properties of the operator $A(t,z;q)$ were further elaborated in \cite[Proposition 4.11]{KLV:CCM}.  In particular, it was shown that for fixed $T>0$, 
\begin{align}\label{not enough}
|t|\norm{ A(t,z;q) }_{L^1_+ \to L^\infty_+} \lesssim 1+ (\Im z)^{-1},
\end{align}
uniformly for $\Im z>0$, $|t|\leq T$, and $q$ belonging to a fixed bounded and equicontinuous subset of $L^2_+(\R)$. To prove the dispersive decay in Theorem~\ref{T}, we strengthen \eqref{not enough} in two essential ways: we derive an upper bound on the $L^1_+ \to L^\infty_+$ norm of $A(t,z;q)$ that (i) does not deteriorate as $z$ approaches the real axis and (ii) is uniform for $t\neq 0$.  It is to achieve these two goals that we impose the small mass assumption. 

\begin{proposition}\label{P:A bdd}
There exists $\delta_0>0$ so that the operator $A(t,z;q)$ satisfies
\begin{align} \label{off axis 2}
|t|\,\|A(t,z;q)\|_{L^1_+ \to L^\infty_+} \lesssim 1,
\end{align}
uniformly for $\Im z>0$, $t\neq 0$, and $q\in L^2_+(\R)$ with $\|q\|_{L^2} \leq \delta_0$.
\end{proposition}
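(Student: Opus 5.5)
We will treat $A(t,z;q)$ as a perturbation of the free operator $A_0(t,z)$ via a resolvent identity, using the smallness of $\|q\|_{L^2}$ to sum a Neumann series in an operator norm adapted to the $L^1_+\to L^\infty_+$ bound. Since $X+2tL_q-z = (X+2tL_0-z) \mp 2t\,qC_+\overline{q}$, we formally have
\begin{equation*}
A(t,z;q) = A_0(t,z) \pm 2t\, A_0(t,z)\, qC_+\overline{q}\, A(t,z;q),
\end{equation*}
or, iterating on the other side,
\begin{equation*}
A = A_0 + \sum_{n\geq 1} (\pm 2t)^n A_0 \bigl(qC_+\overline{q}A_0\bigr)^n .
\end{equation*}
The key observation is that the sandwiched operator $2t\,qC_+\overline{q}A_0(t,z)$ is bounded on $L^1_+$ uniformly in $t$ and $z$. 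Indeed, for $f\in L^1$, \eqref{A0 3} gives $\|A_0 f\|_{L^\infty}\lesssim |t|^{-1}\|f\|_{L^1}$, so $\|\overline{q}A_0f\|_{L^2}\lesssim |t|^{-1}\|q\|_{L^2}\|f\|_{L^1}$. Then $C_+$ is bounded on $L^2$, and multiplication by $q$ maps $L^2\to L^1$ with norm $\|q\|_{L^2}$. Altogether,
\begin{equation*}
\|2t\,qC_+\overline{q}A_0(t,z)\|_{L^1\to L^1}\lesssim \|q\|_{L^2}^2 ,
\end{equation*}
uniformly in $\Im z>0$ and $t\neq0$. (The image lies in $L^1$ but not in $L^1_+$, since $q\cdot C_+(\cdot)$ is a product of Hardy functions and hence Hardy. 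Alternatively we can regard $A_0$ as acting on $L^1+L^2$; \eqref{A0 3} should extend since its kernel bound comes from the Schr\"odinger kernel and $(X-z)^{-1}$.)

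Choosing $\delta_0$ so that the implicit constant times $\delta_0^2$ is at most $\tfrac12$, the series $\sum_n (2t\,qC_+\overline{q}A_0)^n$ converges in $L^1\to L^1$ with norm at most $2$. Composing with $A_0$ on the left and using \eqref{A0 3} once more yields an operator $B$ with $|t|\,\|B\|_{L^1\to L^\infty}\lesssim 1$.

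It remains to identify $B$ with $A(t,z;q)$ on $L^1_+\cap L^2_+$, and then extend by density, since $L^1_+\cap L^2_+$ is dense in $L^1_+$ and the bound is uniform. We can also run the same Neumann series in $L^2$: there $\|2t\,qC_+\overline{q}A_0\|$ is only bounded via \eqref{A0 1} by $|t|(\Im z)^{-1}\|q\|_{L^\infty}^2$, which is not available. A cleaner route is to verify directly that $B f$ lies in $D(X+2tL_q)$ and satisfies $(X+2tL_q-z)Bf=f$. The term $A_0 g$ lies in the domain of $X+2tL_0$ for $g\in L^2$, so we need the partial sums to converge in $L^2$ as well. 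This holds because $A_0$ maps $L^1\to L^\infty$ and also $L^2\to L^2$, and $qC_+\overline{q}$ maps $L^\infty\cap L^2\to L^2\cap L^1$ boundedly, though not with small norm in $L^2$. The identification then follows from uniqueness of the resolvent of the maximally accretive operator from \cite[Proposition 4.10]{KLV:CCM}.

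The main obstacle is exactly this identification step: showing that the $L^1$-convergent Neumann series produces the same operator as the abstractly defined resolvent, and handling the domain issues. I would first try to establish $(X+2tL_q-z)B f=f$ for $f$ in a nice dense class. If needed, I would use the resolvent identity in the form $A - A_0 = \pm 2t A\, qC_+\overline{q} A_0$, which requires only boundedness of $A$ on $L^2$ (norm $\leq(\Im z)^{-1}$ by accretivity) and the bound $qC_+\overline q A_0 f\in L^2$ for $f\in L^1\cap L^2$. Iterating that identity $N$ times and bounding the remainder, which is $A$ applied to an $L^1\cap L^2$ function with geometrically small $L^1$ norm, gives the identification.
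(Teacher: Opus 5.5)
Your core estimate is exactly the paper's proof. The paper also bounds $\|2tqC_+\overline{q}A_0(t,z)\|_{L^1_+\to L^1_+}\lesssim\|q\|_{L^2}^2$ using \eqref{A0 3}, H\"older, and the $L^2$-boundedness of $C_+$. It then sums $\sum_\ell A_0[\pm2tqC_+\overline{q}A_0]^\ell$ for small $\delta_0$ and applies \eqref{A0 3} once more on the left. (The image of $qC_+\overline{q}A_0$ \emph{does} lie in $L^1_+$, for exactly the reason you give.) The paper treats the identification of this series with $A(t,z;q)$ as immediate: it simply calls it the Born series and gives no further argument.

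The arguments you offer for that identification, however, contain a false claim and a circular step.
\begin{itemize}
\item For $q$ merely in $L^2_+$, $qC_+\overline{q}$ does not map $L^\infty\cap L^2$ into $L^2$. Indeed, $q\cdot C_+(\overline{q}h)$ is a product of two $L^2$ functions, neither of which need be bounded (take $q$ with an $|x|^{-1/3}$ singularity). So neither the $L^2$ convergence of the partial sums nor $qC_+\overline{q}A_0f\in L^2$ is available.
\item The remainder $A(\pm2tqC_+\overline{q}A_0)^Nf$ being geometrically small in $L^1$ says nothing about its size after applying $A$. You only know $A$ is bounded on $L^2$, and control of $A$ on $L^1$ is precisely what is being proved.
\end{itemize}

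Your last route does work once $q\in L^2_+\cap L^\infty$. For instance, replace $q$ by $q(\cdot+i\varepsilon)$, which still has $L^2$-norm at most $\delta_0$. Then $V=\pm2tqC_+\overline{q}$ is bounded on $L^2_+$, and $A=\sum_{\ell<N}A_0(VA_0)^\ell+A(VA_0)^N$ holds honestly on $L^2_+$. Only the last factor of $VA_0$ needs $\|q\|_{L^\infty}$, so $\|A(VA_0)^Nf\|_{L^2}\le(\Im z)^{-1}\,2C\|q\|_{L^\infty}\|q\|_{L^2}(2C\delta_0^2)^{N-1}\|f\|_{L^1}\to0$. Hence $Af$ equals your series, with an $L^1\to L^\infty$ bound independent of $\|q\|_{L^\infty}$. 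The general case then follows by letting $\varepsilon\downarrow0$, provided $A(t,z;q(\cdot+i\varepsilon))\to A(t,z;q)$ in the weak operator topology, since weak limits preserve the $L^\infty$ bound.
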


\begin{proof}
For any $\ell\geq 0$, we may bound
\begin{align*}
\bigl\| A_0(t,z) &\bigl[\pm 2tqC_+\overline{q} A_0(t,z) \bigr]^\ell\bigr\|_{L^1_+ \to L^\infty_+}\\
&\leq \| A_0(t,z)\|_{L^1_+ \to L^\infty_+} \bigl\|2tqC_+\overline{q} A_0(t,z)\bigr\|_{L^1_+ \to L^1_+}^\ell\\
&\leq \| A_0(t,z)\|_{L^1_+ \to L^\infty_+}  \bigl[ 2t \|q\|_{L^2}^2 \|C_+\|_{L^2 \to L^2_+}\| A_0(t,z)\|_{L^1_+ \to L^\infty_+} \bigr]^\ell.
\end{align*}
Thus, by \eqref{A0 3} there exists a constant $C>0$ so that
\begin{align*}
\bigl\| A_0(t,z) \bigl(\pm 2tqC_+\overline{q} A_0(t,z) \bigr)^\ell\bigr\|_{L^1_+ \to L^\infty_+}\leq C|t|^{-1}  \bigl[ 2C\delta_0^2 \bigr]^\ell,
\end{align*}
uniformly for $\Im z>0$, $t\neq 0$, and $q\in L^2_+(\R)$ with $\|q\|_{L^2} \leq \delta_0$.

Choosing $\delta_0$ sufficiently small guarantees the convergence of the series 
\[
\sum_{\ell\geq0}A_0(t,z) \bigl[\pm 2tqC_+\overline{q} A_0(t,z) \bigr]^\ell
\]
in the $L^1_+(\R)\to L^\infty_+(\R)$ operator norm.  As this series represents the Born series of $A(t,z;q)$, we obtain
\[
\|A(t,z;q)\|_{L^1_+ \to L^\infty_+}\leq \sum_{\ell\geq 0} C|t|^{-1}  \bigl[ 2C\delta_0^2 \bigr]^\ell \leq \tfrac{C}{1-2C\delta_0^2}|t|^{-1} \qtq{whenever} 2C\delta_0^2<1,
\]
uniformly for $\Im z>0$, $t\neq 0$, and $q\in L^2_+(\R)$ with $\|q\|_{L^2} \leq \delta_0$.
\end{proof}

\begin{remark}
The constant $\delta_0$ in Proposition~\ref{P:A bdd} depends solely on the implicit constant in \eqref{A0 3}.  It is not difficult to find an upper bound for this constant using \eqref{X}, \eqref{A0}, and a stationary phase analysis.  However, as this argument leads to a constant with no physical significance, we do not pursue it here.
\end{remark}

We next recapitulate \cite[Proposition~2.3]{KLV:CCM}, which constructs the unitary group generating solutions to \eqref{CCM}. 

\begin{proposition}\label{P:unique} Let $q(t)$ be a global $H^\infty_+(\R)$ solution to \eqref{CCM}. For all $\psi_0\in L_+^2(\R)$, the initial value problem
\[
\tfrac{d}{dt} \psi(t) =P_{q(t)}\psi(t),\quad \psi(0)=\psi_0
\]
admits a unique $C_t L_+^2\cap C_t^1 H_+^{-2}$ solution, which is global in time. For each $t\in \R$, the mapping $U(t):\psi_0\to\psi(t)$ is unitary on $L_+^2(\R)$,
\[
q(t) = U(t)q(0),\qtq{and} L_{q(t)} = U(t)L_{q(0)} U(t)^*. 
\]
If $\psi_0\in H_+^\infty(\R)$, then so is $\psi(t)$ for all $t\in\R$. If $\psi_0\in H_+^\infty(\R)$, $\langle x\rangle\psi_0\in L^2(\R)$, and $\langle x\rangle q(0)\in L^2(\R)$, then $\langle x\rangle \psi(t)\in L^2(\R)$ for all $t\in\R$. 
\end{proposition}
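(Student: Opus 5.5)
The plan is to treat the linear equation $\partial_t\psi = P_{q(t)}\psi$ as a non-autonomous, formally skew-adjoint evolution with smooth coefficients. I would build solutions for smooth data by energy methods, then extend to $L^2_+$ by density, using conservation of the $L^2$ norm.

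\emph{Skew-adjointness.} On $L^2_+(\R)$ the operator $i\partial^2$ is skew-adjoint. For $f,g\in H^\infty_+$, since $C_+$ is a self-adjoint projection commuting with $\partial$,
\[
\langle q\partial C_+\overline{q} f, g\rangle = \langle \partial C_+\overline{q} f, C_+\overline{q} g\rangle = -\langle C_+\overline{q} f, \partial C_+\overline{q} g\rangle,
\]
so $2q\partial C_+\overline{q}$ is skew-symmetric as well. Hence $\tfrac{d}{dt}\|\psi\|_{L^2}^2 = 0$ for smooth solutions.

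\emph{Existence and uniqueness.} I would first solve a regularized problem, replacing $P_{q(t)}$ by $\Pi_N P_{q(t)} \Pi_N$ with $\Pi_N$ a Fourier cutoff to $[0,N]$; this preserves skew-symmetry. I would then prove $H^s$ energy bounds uniform in $N$ on compact time intervals. The commutators of $\partial^s$ with $2q\partial C_+\overline{q}$ involve derivatives of $q$, which are bounded in $L^\infty$ because $q\in C_tH^\infty_+$, so Gronwall applies. Passing to the limit gives global $H^\infty_+$ solutions that conserve the $L^2$ norm. The map $\psi_0\mapsto\psi(t)$ is therefore an isometry on a dense set and extends to $L^2_+$; the equation in $C^1_tH^{-2}_+$ then follows since $P_q: L^2_+\to H^{-2}_+$ is bounded.

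Uniqueness in $C_tL^2_+\cap C^1_tH^{-2}_+$ follows by duality. Pair a putative solution $\psi$ with an $H^\infty_+$ solution $\phi$ of the same equation posed backward from any time $t_1$. Skew-adjointness gives $\tfrac{d}{dt}\langle\psi,\phi\rangle = 0$, which is justified by the $H^{-2}/H^2$ pairing. Hence $\langle\psi(t_1),\phi(t_1)\rangle=\langle\psi_0,\phi(0)\rangle$ is determined by $\psi_0$, and by density of the $\phi(t_1)$ this determines $\psi(t_1)$. Solving backward shows $U(t)$ is surjective, hence unitary.

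\emph{Identification with the flow and conjugation.} Expanding $P_q q = iq'' \pm 2q\,\partial C_+(|q|^2)$ shows that \eqref{CCM} is exactly $\partial_t q = P_q q$. By uniqueness, $q(t)=U(t)q(0)$. For the conjugation identity, I would apply to $H^\infty_+$ vectors $\psi_0$ the computation
\[
\tfrac{d}{dt}\bigl[U(t)^*L_{q(t)}U(t)\psi_0\bigr] = U(t)^*\bigl(-P_{q}L_{q} + \tfrac{d}{dt}L_{q} + L_{q}P_{q}\bigr)U(t)\psi_0 = 0,
\]
which vanishes by the Lax equation \eqref{LP0}. So $U^*L_{q(t)}U = L_{q(0)}$ on a core, and this extends to the operator identity.

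\emph{Weighted persistence.} This is the step I expect to be the main obstacle. I would compute $\tfrac{d}{dt}\|x\psi\|_{L^2}^2 = 2\,\mathrm{Re}\,\langle [x,P_{q}]\psi, x\psi\rangle$, justified by a truncated weight $x\langle\varepsilon x\rangle^{-1}$ and letting $\varepsilon\to0$. Here $[x,i\partial^2]=-2i\partial$ is harmless for $\psi\in H^\infty_+$. The delicate term is $[x,C_+]$: one has $[x,C_+]f = \tfrac{i}{2\pi}\int f\,dx$, which is finite when $f=\overline{q}\psi\in L^1$, and this holds by Cauchy--Schwarz. The resulting bound involves $\|xq(t)\|_{L^2}$, so I would first run the argument with $\psi=q$ itself, closing a Gronwall inequality using only $H^\infty$ norms of $q$. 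With $\langle x\rangle q(t)\in L^2$ controlled on compact time intervals, the same Gronwall argument then handles general $\psi_0\in H^\infty_+$ with $\langle x\rangle\psi_0\in L^2$.
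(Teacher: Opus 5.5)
Your argument is correct. The paper does not prove this proposition itself; it imports it from \cite[Proposition~2.3]{KLV:CCM}. Your outline is a correct version of the standard way such a statement is proved: uniform $H^s$ energy estimates for a skew-symmetric regularization, density plus duality for uniqueness and unitarity in $C_tL^2_+\cap C^1_tH^{-2}_+$, the Lax equation for the conjugation identity, and a truncated-weight Gronwall bound for the weighted claim.

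One simplification in the last step: since $\partial$ annihilates the constant $[x,C_+](\overline{q}\psi)$, one has $[x,P_q]=2L_q$ (cf.\ Lemma~\ref{X commute}). Hence $\tfrac{d}{dt}\|x\psi\|_{L^2}\leq 2\|L_{q(t)}\psi\|_{L^2}$, and no preliminary control of $\|xq(t)\|_{L^2}$ is needed. With the weight $x\langle\varepsilon x\rangle^{-1}$, the leftover term is $O(\varepsilon\|q\|_{L^2}^2\|\psi\|_{L^2})$ in $L^2$, using only $\overline{q}\psi\in L^1$.
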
 

We will employ the following commutator identity; see \cite[Lemma~4.6]{KLV:CCM}:

\begin{lemma}\label{X commute} If $q\in H_+^\infty(\R)$ and $\langle x\rangle q\in L^2(\R)$, then
\begin{equation}\label{commutator}
[X,P_q]= 2L_q. 
\end{equation}
\end{lemma}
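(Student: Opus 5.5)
We will verify \eqref{commutator} by direct computation on the dense class of $g\in H^\infty_+(\R)$ with $\langle x\rangle g\in L^2(\R)$, which is the class on which the identity is needed in Proposition~\ref{P:unique}. By Remark~\ref{Xx}, $X$ acts on such functions as multiplication by $x$. We must check that every function to which $X$ is applied in $[X,P_q]g = XP_qg - P_qXg$ also has this decay; granting that, we may replace $X$ by $x$ throughout. Since $P_q = i\partial^2 \pm 2q\partial C_+\overline{q}$, we treat the linear part and the nonlinear part separately.

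\textbf{Linear part.} From $[x,\partial]=-1$ we obtain $[x,\partial^2] = -2\partial$. Hence $[x,i\partial^2]g = -2i\partial g$, which is exactly twice the $-i\partial$ term of $L_q$.

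\textbf{Nonlinear part.} Multiplication by $x$ commutes with multiplication by $q$ and by $\overline{q}$. Therefore
\[
[x, q\partial C_+\overline{q}] = q\,[x,\partial C_+]\,\overline{q} = q\bigl(-C_+ + \partial [x,C_+]\bigr)\overline{q}.
\]
The key observation is that $[x,C_+]$ outputs constants. Set $h=\overline{q}g$; it lies in $L^1\cap L^2$ and $xh\in L^2$. Using the Cauchy-integral representation $C_+h(x) = \frac{1}{2\pi i}\int \frac{h(y)}{y-x-i0}\,dy$, we get
\[
xC_+h - C_+(xh) = \tfrac{1}{2\pi i}\int \tfrac{(x-y)h(y)}{y-x}\,dy = -\tfrac{1}{2\pi i}\int h(y)\,dy,
\]
which is a constant function of $x$. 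Equivalently, on the Fourier side the commutator of $i\frac{d}{d\xi}$ with $1_{\xi>0}$ produces a multiple of $\delta_0$ times $\hat h(0)$. Consequently $\partial[x,C_+]h = 0$, and
\[
[x, \pm 2q\partial C_+\overline{q}]g = \mp 2qC_+(\overline{q}g).
\]
Adding the two parts gives $[X,P_q]g = -2i\partial g \mp 2qC_+\overline{q}g = 2L_qg$.

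\textbf{Main obstacle.} The step requiring care is justifying the manipulation $\partial[x,C_+]$. The intermediate objects $xC_+h$ and $C_+(xh)$ need not lie in $L^2$ individually: their difference is a nonzero constant whenever $\int h\neq 0$. We will resolve this by grouping terms differently. We compute $x\,\partial C_+h - \partial C_+(xh)$ directly on the Fourier side: $\partial C_+$ has symbol $i\xi 1_{\xi>0}$, which vanishes at $\xi=0$. Its commutator with $i\frac{d}{d\xi}$ is therefore just $-1_{\xi>0}$, with no delta term, and this yields $-C_+h$ rigorously in $L^2$. The hypotheses $q\in H^\infty_+$ and $\langle x\rangle q\in L^2$ ensure that $\hat h\in H^1$, so that every function involved lies in $D(X)$, and that $q\cdot(\,\cdot\,)$ preserves the relevant weighted spaces.
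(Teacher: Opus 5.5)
The paper does not prove this lemma itself (it is quoted from \cite[Lemma~4.6]{KLV:CCM}), so your argument has to stand on its own. Your algebra is right: $[x,i\partial^2]=-2i\partial$. Your Fourier-side treatment of $[x,\partial C_+]=-C_+$ is correct and rigorous, since the symbol $i\xi 1_{\xi>0}$ vanishes at $\xi=0$ and so no $\delta_0$ term appears; this cleanly disposes of the constant produced by $[x,C_+]$. The domain claims in the nonlinear part are also true, though the reason deserves a sentence. For $h=\ol{q}g$ one has $(xh)'=h+x\ol{q}'g+(x\ol{q})g'\in L^2$ because the weight can be placed on $q$. Note that $xg'$ itself need not lie in $L^2$; the example below shows this.

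The gap is in the linear part, at the step you ``grant'' and later assert: that every function to which $X$ is applied lies in $D(X)$. On your class $\{g\in H^\infty_+:\langle x\rangle g\in L^2\}$ this is false. The hypotheses control $\langle\xi\rangle^k\hat g$ and $\hat g'$ in $L^2$ separately, but not $\xi^2\hat g'$. Take $\phi\in C^\infty_c(-1,1)$ and $\hat g(\xi)=\sum_{n\geq4}n^{-1/2}e^{-n/4}\phi\bigl(n^{-1}e^{n/2}(\xi-n)\bigr)$. Then $\|\langle\xi\rangle^k\hat g\|_{L^2}^2\approx\sum_n n^{2k}e^{-n}<\infty$ and $\|\hat g'\|_{L^2}^2\approx\sum_n n^{-2}<\infty$, while $\|\xi^2\hat g'\|_{L^2}^2\approx\sum_n n^{2}=\infty$.

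Consequences of this example:
\begin{itemize}
\item $\widehat{g''}=-\xi^2\hat g\notin H^1$, so $P_qg\notin D(X)$. The nonlinear part of $P_qg$ is harmless here; the linear part is the culprit.
\item $Xg=xg\notin H^2_+$.
\item Hence neither $XP_qg$ nor $P_qXg$ is defined as a composition of operators on $L^2_+$.
\end{itemize}

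The repair is to read $[X,P_q]g$ as $xP_qg-P_q(xg)$ in $H^{-2}$. Since $xg''=(xg)''-2g'$ as distributions, both terms lie in $H^{-2}$, and your computation shows their difference is $2L_qg\in L^2_+$. This is exactly the sense in which the lemma is used when deriving \eqref{Jq}, where the time derivative is taken in $H^{-2}_+$. Restricting instead to $g$ with $\langle x\rangle g\in H^2$ would make your argument literally correct. However, Proposition~\ref{P:unique} does not say that $U_n(t)$ preserves that smaller class, so the distributional reading is the one the paper needs.
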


\section{Dispersive decay and scattering}\label{S:Proof} 
This section is devoted to the proof of Theorem~\ref{T}.  We treat each part separately.

\begin{proof}[Proof of Theorem~\ref{T}(i)] Suppose the initial data $q\in L_+^2(\R)$ satisfies $\|q\|_{L^2}\leq \delta_0$, where $\delta_0$ is the small parameter in Proposition~\ref{P:A bdd}.  Suppose further that $q\in L^1(\R)$, and let $q(t)$ denote the corresponding global solution to \eqref{CCM}.

Recall that any function $f\in L^2_+(\R)$ admits a holomorphic extension to the upper half-plane:
\begin{align*}
f(x+ib) =  \int \frac{b}{\pi |y-(x+ib)|^2} f(y) \,dy \qtq{defined for} b >0;
\end{align*}
see \eqref{PIF 2}.  Moreover, for almost every $x\in\R$, the boundary value $f(x)$ can be obtained as the pointwise limit of $f(x+ib)$ as $b\downarrow 0$.
In this way, to derive the dispersive estimate \eqref{decay}, it suffices to obtain pointwise time decay for the solution $q(t,x+ib)$ that is independent of the height $b>0$ in the upper half-plane.  We will analyze $q(t,x+ib)$ via the explicit formula \eqref{EF}.


Using \eqref{EF} followed by the resolvent identity, we write
\begin{align*}
q(t,x+ib)
&=\tfrac{1}{2\pi i} I_+\bigl[A(t,x+ib ;q) q\bigr]\\
&= \tfrac{1}{2\pi i} I_+ \bigl[A_0(t,x+ib)q\bigr] \pm \tfrac{1}{2\pi i} I_+ \bigl[ A_0(t,x+ib) 2t qC_+\ol{q} A(t,x+ib ;q) q \bigr] .
\end{align*}

In view of \eqref{A0 4},
\begin{equation*}
\tfrac{1}{2\pi i}I_+ \bigl[A_0(t,x+ib)q\bigr]= [e^{it\Delta} q] (x+ib),
\end{equation*}
and using \eqref{PIF} and \eqref{dispersive-estimate} we may bound
\begin{equation*}
\bnorm{ [e^{it\Delta} q] (x+ib) }_{L^\infty_x}
\leq \norm{ e^{it\Delta} q }_{L^\infty_x}
\lesssim |t|^{-\frac{1}{2}} \bnorm{q}_{L^1} .
\end{equation*}

Arguing similarly, we write
\begin{equation*}
\tfrac{1}{2\pi i} I_+ \bigl[ A_0(t,x+ib) 2t qC_+\ol{q} A(t,x+ib ;q) q \bigr]
= \bigl[e^{it\Delta} \bigl(2t qC_+\ol{q} A(t,x+ib ;q) q\bigr) \bigr] (x+ib)
\end{equation*}
and use \eqref{PIF}, \eqref{dispersive-estimate}, and \eqref{off axis 2} to estimate
\begin{align*}
\bnorm{  \bigl[e^{it\Delta} \bigl(2t qC_+\ol{q} A(t,x+ib ;q) &q\bigr) \bigr] (x+ib) }_{L^\infty_x} \\
&\leq \bnorm{ e^{it\Delta} \bigl(2t C_+q A(t,x+ib ;q) q\bigr) }_{L^\infty_x} \\
&\lesssim |t|^{-\frac{1}{2}} \bnorm{ 2t qC_+\ol{q} A(t,x+ib ;q) q }_{L^1_x} \\
&\lesssim |t|^{\frac{1}{2}} \snorm{q}_{L^2}^2 \bnorm{ A(t,x+ib ;q)}_{L^1_+\to L^\infty_+}\|q\|_{L^1} \\
&\lesssim |t|^{-\frac{1}{2}} \|q\|_{L^2}^2 \snorm{q}_{L^1} .
\end{align*}

Combining these bounds we arrive at
\begin{equation*}
\norm{ q(t,x+ib) }_{L^\infty_x} \lesssim (1+\|q\|_{L^2}^2) |t|^{-\frac{1}{2}} \snorm{q}_{L^1}
\end{equation*}
uniformly for $t\neq 0$ and $b>0$.  Taking the limit $b\downarrow 0$ yields \eqref{decay}.\end{proof}

\begin{proof}[Proof of Theorem~\ref{T}(ii)] As above, we suppose the initial data $q\in L_+^2(\R)$ satisfies $\|q\|_{L^2}\leq\delta_0$. Suppose further that $xq\in L^2(\R)$, and let $q(t)$ denote the corresponding global solution to \eqref{CCM}.  By Cauchy--Schwarz, we have $q\in L^1(\R)$, so that Theorem~\ref{T}(i) guarantees the dispersive decay estimate \eqref{decay} for $q(t)$.  

We now introduce a sequence $\{q_n\}_{n=1}^\infty\subset H_+^\infty(\R)$ such that $\langle x\rangle q_n \in L^2(\R)$, with $q_n\to q$ in $L^2(\R)$ and $x q_n\to xq$ in $L^2(\R)$.  In particular, we may assume \begin{align}\label{hyp}
\|xq_n\|_{L^2}\lesssim1 \qtq{and} \|q_n\|_{L^2}\leq \delta_0 \quad \text{for all $n\geq 1$}.
\end{align}

Let $q_n(t)$ denote the solutions to \eqref{CCM} with initial data $q_n$.  In view of \eqref{hyp}, these satisfy
\begin{align}\label{1.5 for qn}
\sup_{t\in \R} |t|^{\frac12} \|q_n(t)\|_{L^\infty_x} \lesssim (1 + \|q_n\|_{L^2}^2) \|q_n\|_{L^1}\lesssim 1,
\end{align}
with the implicit constant uniform in $n\geq 1$. By $L^2$ well-posedness, we have that $q_n(t)\to q(t)$ in $L^2$ for any $t\in\R$. 

Writing $U_n(t)$ for the unitary group associated to $q_n(t)$ constructed in Proposition~\ref{P:unique}, we begin by establishing
\begin{equation}\label{Jq}
U_n(t)X\psi = (X-2tL_{q_n(t)})U_n(t)\psi
\end{equation}
for all $\psi\in H_+^\infty(\R)$ satisfying $\langle x\rangle\psi\in L^2(\R)$. This identity holds trivially at $t=0$, so that by the uniqueness asserted in Proposition~\ref{P:unique}, it suffices to show
\[
\tfrac{d}{dt}[U_n(t)X - (X-2t{L}_{q_n(t)})U_n(t)]\psi = {P}_{q_n(t)}[U_n(t)X - (X-2t{L}_{q_n(t)} )U_n(t)]\psi.
\]
To this end, we compute directly using Proposition~\ref{P:unique}, \eqref{LP0}, and Lemma~\ref{X commute}:
\begin{align*}
\tfrac{d}{dt}&[U_n(t) X - (X-2t{L}_{q_n(t)})U_n(t)]\psi \\
& = {P}_{q_n(t)} U_n(t) X\psi - (X-2t{L}_{q_n(t)}){P}_{q_n(t)} U_n(t) \psi \\
& \quad + 2{L}_{q_n(t)}U_n(t)\psi + 2t[{P}_{q_n(t)},{L}_{q_n(t)}]U_n(t)\psi \\
& = {P}_{q_n(t)}[U_n(t) X - (X-2t{L}_{q_n(t)})U_n(t)]\psi  - [X,{P}_{q_n(t)}]U_n(t)\psi + 2{L}_{q_n(t)} U_n(t)\psi \\
& =  {P}_{q_n(t)}[U_n(t) X - (X-2t{L}_{q_n(t)})U_n(t)]\psi.
\end{align*}
Claim \eqref{Jq} now follows from Proposition~\ref{P:unique}, as explained above.

Using \eqref{Jq}, $q_n(t)=U_n(t)q_n$, and the unitarity of $U_n(t)$, we now obtain
\begin{align*}
\|(X-2t{L}_{q_n(t)})q_n(t)\|_{L^2_x} = \|U_n(t)Xq_n\|_{L^2_x} = \|xq_n\|_{L^2} \lesssim 1
\end{align*}
uniformly for $t\in\R$ and $n\geq 1$. By \eqref{hyp} and \eqref{1.5 for qn}, we have 
\[
|t|\, \| q_n(t) C_+(|q_n(t)|^2)\|_{L^2_x} \lesssim |t| \,\|q_n(t)\|_{L^\infty_x} \| |q_n(t)|^2\|_{L^2_x} \lesssim |t|\, \|q_n(t)\|_{L^\infty_x}^2 \|q_n(t)\|_{L^2_x} \lesssim 1
\]
uniformly for $t\in\R$ and $n\geq 1$.  Combining the last two displays, we obtain
\begin{equation}\label{Jbd}
\begin{aligned}
&\|(x+2it\partial)q_n(t) \|_{L^2_x} \\
&\qquad \qquad \lesssim \|(X-2t{L}_{q_n(t)})q_n(t)\|_{L^2_x} + |t|\,\| q_n(t) C_+(|q_n(t)|^2)\|_{L^2_x} \lesssim 1
\end{aligned}
\end{equation}
uniformly for $t\in\R$ and $n\geq 1$. 

We turn now to the proof of scattering.  We define
\[
f_n(t) = e^{-it\Delta} q_n(t) \ \ \ \text{and}\ \ \ g_n(t) = \F \M(t) f_n(t), \ \ \ \text{so that}\ \ \ q_n(t) = \M(t) \D(t) g_n(t). 
\]
By direct calculation using \eqref{CCM}, \eqref{MDFM}, and the fact that $C_+$ commutes with dilations, we find 
\begin{align*}
i\partial_t \hat f_n(t)&  = \F e^{-it\Delta}[\pm 2i q_n(t) C_+(|q_n(t)|^2)']\\
&=(4t|t|)^{-1} \F \overline{\M(t)} \F^{-1}[ \pm 2i g_n(t) C_+(|g_n(t)|^2)']. 
\end{align*}
Thus, by Plancherel,
\begin{align}\label{deriv fn}
\|\partial_t \hat f_n(t)\|_{L^2} &\lesssim |t|^{-2} \|g_n(t) C_+(|g_n(t)|^2)'\|_{L^2_x} \lesssim |t|^{-2} \|g_n(t)\|_{L^\infty_x}^2 \|  g'_n(t)\|_{L^2_x}. 
\end{align}

Employing Cauchy--Schwarz, the identity $x+2it\partial = e^{it\Delta} x e^{-it\Delta}$, \eqref{hyp}, and \eqref{Jbd}, we can bound
\begin{align*}
\|g_n(t)\|_{L^\infty_x} \lesssim \|f_n(t)\|_{L^1_x} & \lesssim \|f_n(t)\|_{L^2_x} + \|xf_n(t)\|_{L^2_x}\\
&  \lesssim \| q_n(t)\|_{L^2_x} + \|(x+2it\partial) q_n(t)\|_{L^2_x} \lesssim 1
\end{align*}
uniformly for $t\in\R$ and $n\geq 1$.  Using also Plancherel, we obtain
\[
\|g_n'(t)\|_{L^2_x} = \|xf_n(t)\|_{L^2_x} = \|(x+2it\partial)q_n(t)\|_{L^2_x} \lesssim 1
\]
uniformly for $t\in\R$ and $n\geq 1$. Inserting the last two bounds into \eqref{deriv fn}, we find
\begin{align*}
\|\partial_t \hat f_n(t)\|_{L^2_x} \lesssim |t|^{-2} \quad\text{uniformly for $|t|\geq 1$ and $n\geq 1$. }
\end{align*}

By the Plancherel Theorem and the Fundamental Theorem of Calculus, we therefore deduce that for each $n\geq 1$ there exists $q_n^+\in L_+^2$ such that
\begin{align}\label{1210}
\|f_n(t) - q_n^+\|_{L^2_x} \lesssim t^{-1}\qtq{uniformly for}t\geq 1\qtq{and}n\geq 1.
\end{align}
Moreover, for any $t\geq T\geq 1$ and $n,m\geq 1$, we have
\begin{align*}
\|f_n(t)-f_m(t)\|_{L^2_x} & \leq \|f_n(T)- f_m(T)\|_{L^2} + \int_T^t \|\partial_s \hat f_n(s)\|_{L^2_x} + \|\partial_s \hat f_m(s)\|_{L^2_x} \,ds \\
& \lesssim \|q_n(T)-q_m(T)\|_{L^2} + T^{-1}.
\end{align*}
Using $L^2$ well-posedness, we can therefore derive that $\{q_n^+\}_{n\geq 1}$ is Cauchy in $L^2(\R)$ and hence converges to a limit $q^+\in L_+^2(\R)$ as $n\to\infty$.  

Using \eqref{1210} and the triangle inequality, we have
\begin{align*}
\|e^{-it\Delta} q(t) - q^+ \|_{L^2_x} & \leq \| e^{-it\Delta} q(t) - f_n(t) \|_{L^2_x} + \|f_n(t) - q_n^+\|_{L^2_x} + \|q_n^+ - q^+ \|_{L^2} \\
& \lesssim \| q(t)-q_n(t)\|_{L^2_x} + \|q_n^+- q^+\|_{L^2} + t^{-1}
\end{align*} 
for any $t,n\geq 1$. Using $L^2$ well-posedness once again, we conclude that $e^{-it\Delta}q(t) \to q^+$ in $L^2(\R)$ as $t\to+\infty$ and moreover that \eqref{rate} holds as $t\to +\infty$.  A parallel argument covers the limit $t\to -\infty$, completing the proof of Theorem~\ref{T}. \end{proof}

\end{document}